\newcommand{\email}[1]{\href{mailto:#1}{\nolinkurl{#1}}}
\newlength{\mySubFigSize}
\definecolor{labelkey}{rgb}{0,0.08,0.45}
\definecolor{refkey}{rgb}{0,0.6,0.0}
\definecolor{Brown}{rgb}{0.45,0.0,0.05}
\definecolor{dgreen}{rgb}{0.00,0.60,0.00}
\definecolor{dblue}{HTML}{0455BF}
\definecolor{orng}{HTML}{D35400}
\definecolor{dred}{HTML}{D90404}
\definecolor{Dblue}{HTML}{8602DC}
\definecolor{bluep}{HTML}{0171BD}
\definecolor{pblue}{rgb}{0.1176,0.5647,1}
\definecolor{pgreen}{rgb}{0.1961,0.8039,0.1961}
\definecolor{pred}{rgb}{1.0,0.2706,0.0}
\definecolor{pyellow}{rgb}{1.0,0.6471,0.0}
\renewcommand\familydefault{\rmdefault}
\DeclareMathAlphabet{\mathrm}{OT1}{\familydefault}{m}{n}
\def\operator@font{\mathgroup\symoperators\rm}
\renewcommand{\leq}{\ensuremath{\leqslant}}
\renewcommand{\geq}{\ensuremath{\geqslant}}
\newcommand{\minimize}[2]{\ensuremath{\underset{\substack{{#1}}}%
{\text{\rm minimize}}\;\;#2 }}
\newcommand{\scal}[2]{{\langle{{#1}\mid{#2}}\rangle}}
\newcommand{\menge}[2]{\big\{{#1}~ |~{#2}\big\}}
\newcommand{\HHH}{\ensuremath{\boldsymbol{\mathcal H}}}
\newcommand{\GGG}{\ensuremath{{\boldsymbol{\mathcal G}}}}
\newcommand{\Argmin}{\ensuremath{{\text{\rm Argmin}\,}}}
\newcommand{\diag}{\ensuremath{\text{\rm diag}\,}}
\newcommand{\dft}{\ensuremath{\text{\rm DFT}\,}}
\newcommand{\idft}{\ensuremath{\text{\rm IDFT}\,}}
\newcommand{\HH}{\ensuremath{{\mathcal H}}}
\newcommand{\GG}{\ensuremath{{\mathcal G}}}
\newcommand{\Sum}{\ensuremath{\displaystyle\sum}}
\newcommand{\emp}{\ensuremath{{\varnothing}}}
\newcommand{\Id}{\ensuremath{\text{\rm Id}}}
\newcommand{\RR}{\ensuremath{\mathbb{R}}}
\newcommand{\RPP}{\ensuremath{\left]0,+\infty\right[}}
\newcommand{\soft}[1]{\ensuremath{{\:\text{\rm soft}}_{{#1}}\,}}
\newcommand{\hard}[1]{\ensuremath{{\:\text{\rm hard}}_{{#1}}\,}}
\newcommand{\RX}{\ensuremath{\left]-\infty,+\infty\right]}}
\newcommand{\NN}{\ensuremath{\mathbb N}}
\newcommand{\intdom}{\ensuremath{\text{int\,dom}\,}}
\newcommand{\exi}{\ensuremath{\exists\,}}
\newcommand{\ran}{\ensuremath{\text{\rm ran}\,}}
\newcommand{\cran}{\ensuremath{\overline{\text{\rm ran}}\,}}
\newcommand{\zer}{\ensuremath{\text{\rm zer}\,}}
\newcommand{\pinf}{\ensuremath{{+\infty}}}
\newcommand{\dom}{\ensuremath{\text{\rm dom}\,}}
\newcommand{\prox}{\ensuremath{\text{\rm prox}}}
\newcommand{\proj}{\ensuremath{\text{\rm proj}}}
\newcommand{\Fix}{\ensuremath{\text{\rm Fix}\,}}
\newcommand{\card}{\ensuremath{\text{\rm card}\,}}
\newcommand{\sign}{\ensuremath{\,\text{\rm sign}}}
\newcommand{\gra}{\ensuremath{\text{\rm gra}\,}}
\newcommand{\inte}{\ensuremath{\text{\rm int}\,}}
\newcommand{\rzeroun}{\ensuremath{\left]0,1\right]}}
\def\abstract{\noindent{\bfseries Abstract}. \ignorespaces}
\newtheorem{theorem}{Theorem}[section]
\newtheorem{lemma}[theorem]{Lemma}
\newtheorem{proposition}[theorem]{Proposition}
\theoremstyle{plain}{\theorembodyfont{\rmfamily}%
\newtheorem{example}[theorem]{Example}}
\theoremstyle{plain}{\theorembodyfont{\rmfamily}%
\newtheorem{remark}[theorem]{Remark}}
\theoremstyle{plain}{\theorembodyfont{\rmfamily}%
}
\theoremstyle{plain}{\theorembodyfont{\rmfamily}%
}
\theoremstyle{plain}{\theorembodyfont{\rmfamily}%
}
\theoremstyle{plain}{\theorembodyfont{\rmfamily}%
\newtheorem{definition}[theorem]{Definition}}
\theoremstyle{plain}{\theorembodyfont{\rmfamily}%
\newtheorem{problem}[theorem]{Problem}}
\numberwithin{equation}{section}
\begin{document}

\title{\sffamily\huge A Variational Inequality Model for the
Construction of Signals from Inconsistent Nonlinear 
Equations\thanks{Contact 
author: P. L. Combettes, {\email{plc@math.ncsu.edu}},
phone:+1 (919) 515 2671. The work of P. L. Combettes was 
supported by the National Science Foundation under grant 
CCF-1715671 and the work of Z. C. Woodstock was supported by 
the National Science Foundation under grant DGE-1746939.}}

\author{Patrick L. Combettes and Zev C. Woodstock\\
\small North Carolina State University,
Department of Mathematics,
Raleigh, NC 27695-8205, USA\\
\small \email{plc@math.ncsu.edu}\,, \email{zwoodst@ncsu.edu}\\
}

\date{~}
\maketitle

\bigskip

\begin{abstract} 
Building up on classical linear formulations, we posit that a broad
class of problems in signal synthesis and in signal recovery are
reducible to the basic task of finding a point in a closed convex
subset of a Hilbert space that satisfies a number of nonlinear
equations involving firmly nonexpansive operators. We investigate
this formalism in the case when, due to inaccurate modeling or
perturbations, the nonlinear equations are inconsistent. A relaxed
formulation of the original problem is proposed in the form of a
variational inequality. The properties of the relaxed problem are
investigated and a provenly convergent block-iterative algorithm,
whereby only blocks of the underlying firmly nonexpansive operators
are activated at a given iteration, is devised to solve it.
Numerical experiments illustrate robust recoveries in several
signal and image processing applications.
\end{abstract}

\newpage
\section{Introduction}
\label{sec:1}

Signal construction encompasses forward problems such as image
synthesis, holography, filter design, time-frequency distribution
synthesis, and radiation therapy planning, as well as inverse
problems such as density estimation, signal denoising, image
interpolation, signal extrapolation, audio declipping, image 
reconstruction, or deconvolution; see, e.g., 
\cite{Avil17,Cens05,Aiep96,Ehrg10,Fouc18,Gold85,%
Renc19,Sale87,Samw18,Shak08,Whit92}. Essential components in the
mathematical modeling of signal construction problems are equations
tying the ideal solution $\overline{x}$ in a space $\HH$ to
given prescriptions in a space $\GG$, say $W\overline{x}=p$,
where $W$ is an operator mapping $\HH$ to $\GG$. 
The prescription $p$ can be a design specification in
forward problems, or an observation in inverse problems. 

In 1978, Youla \cite{Youl78} elegantly brought to light the simple
geometry that underlies many classical problems in signal
construction by reducing them to the following formulation: given
closed vector subspaces $C$ and $D$ in a real Hilbert space $\HH$,
and a point $p\in D$, 
\begin{equation}
\label{e:Youla78}
\text{find}\;\;x\in C\:\;\text{such that}\:\;\proj_{D}x=p,
\end{equation}
where $\proj_D$ denotes the projection operator onto $D$. In the
context of signal recovery, the original signal of interest
$\overline{x}$ is known to lie in $C$ and some observation $p$ of
it is available in the form of its projection onto $D$. A natural 
nonlinear extension of this setting is obtained by considering
nonempty closed convex sets $C$ in $\HH$ and $D$ in a real Hilbert
space $\GG$, a bounded linear operator $L\colon\HH\to\GG$, a point
$p\in D$, and setting as an objective to
\begin{equation}
\label{e:y1}
\text{find}\;\;x\in C\:\;\text{such that}\:\;\proj_{D}(Lx)=p.
\end{equation}
An early instance of this model appears in \cite{Abel91}, where
$C$ is a set of bandlimited signals and $p$ is an observation of
$N$ clipped samples of the original signal. Thus,
$L\colon\HH\to\RR^N$ is the sampling operator and 
$D=\menge{y\in\RR^N}{\|y\|_\infty\leq\rho}$ for some 
$\rho\in\RPP$. A key property of projectors onto closed convex 
sets is their firm nonexpansiveness. Recall that an operator 
$F\colon\GG\to\GG$ is firmly nonexpansive if \cite{Livre1}
\begin{equation}
\label{e:f}
(\forall x\in\GG)(\forall y\in\GG)\quad
\scal{x-y}{Fx-Fy}\geq\|Fx-Fy\|^2.
\end{equation}
In \cite{Eusi20,Ibap21}, it was shown that many nonlinear
observation processes found in signal processing, machine learning,
and inference problems can be represented through such operators.
This prompts us to consider the following formulation, whereby the
prescriptions are modeled via Wiener systems (see 
Figure~\ref{fig:w}).

\begin{figure}[h!tb]
\begin{center}
\begin{circuitikz} 
\draw (0,1.5) node[left,label={[xshift=-0.75mm,
    yshift=-3.25mm]$\overline{x}$}](x){};
\draw (4.0,3.0) node[right](p1){$p_1$};
\draw (4.0,0) node[right](pm){$p_m$};
\draw (4.0,1.5) node[right](pi){$p_i$};
\draw (2.40,0.85) node[] {$\vdots$};
\draw (2.40,2.35) node[] {$\vdots$};
\draw[line width=0.28mm]
(x) to[] (0.5,1.5)
    to[] (0.5,3.0)
    to[] (1.75,3.0) node[draw,fill=white,minimum
    size=8pt, inner sep=4.5pt](l1){$L_{1}$}
    to[] (3.0,3.0) node[draw,fill=white,minimum
    size=8pt, inner sep=4.5pt](f1){$F_{1}$};
\draw[-latex,line width=0.25mm] (f1.east) -- (p1.west);
\draw[-latex,line width=0.25mm] (1.25,3.0) -- (l1.west);
\draw[-latex,line width=0.25mm] (l1.east) -- (f1.west);
\draw[line width=0.28mm]
(x) to[] (0.5,1.5)
    to[] (0.5,0)
    to[] (1.75,0) node[draw,fill=white,minimum
    size=8pt, inner sep=4pt](lm){$L_{m}$}
    to[] (3.0,0) node[draw,fill=white,minimum
    size=8pt, inner sep=4pt](fm){$F_{m}$};
\draw[-latex,line width=0.25mm] (fm.east) -- (pm.west);
\draw[-latex,line width=0.25mm] (1.25,0) -- (lm.west);
\draw[-latex,line width=0.25mm] (lm.east) -- (fm.west);
\draw[line width=0.28mm]
(x) to[] (0.5,1.5)
    to[] (0.5,1.5)
    to[] (1.75,1.5) node[draw,fill=white,minimum
    size=8pt, inner sep=5.2pt](li){$L_{i}$}
    to[] (3.0,1.5) node[draw,fill=white,minimum
    size=8pt, inner sep=5.2pt](fi){$F_{i}$};
\draw[-latex,line width=0.25mm] (fi.east) -- (pi.west);
\draw[-latex,line width=0.25mm] (1.25,1.5) -- (li.west);
\draw[-latex,line width=0.25mm] (li.east) -- (fi.west);
\end{circuitikz}
\end{center}
\caption{Illustration of Problem~\ref{prob:1} with $m$
prescriptions $(p_i)_{1\leq i\leq m}$. The $i$th prescription
$p_i$ is the output produced when the ideal signal $\overline{x}$
is input to a Wiener system $W_i=F_i\circ L_i$, i.e., the
concatenation of a linear system $L_i$ and a nonlinear system $F_i$
\cite{Sche81}. In the proposed model, $F_i$ is a firmly
nonexpansive operator.}
\label{fig:w}
\end{figure}
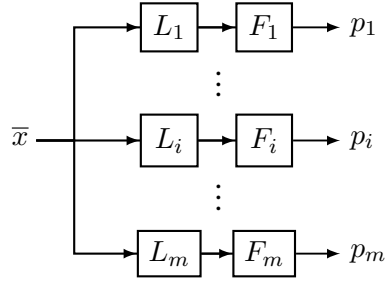

\begin{problem}
\label{prob:1}
Let $I$ be a nonempty finite set and let $C$ be a nonempty closed
convex subset of a real Hilbert space $\HH$. For every $i\in I$, 
let $\GG_i$ be a real Hilbert space, let $p_i\in\GG_i$, let
$L_i\colon\HH\to\GG_i$ be a nonzero bounded linear operator, and
let $F_i\colon\GG_i\to\GG_i$ be a firmly nonexpansive operator. The
task is to 
\begin{equation}
\label{e:prob1}
\text{find}\:\;x\in C\:\;\text{such that}\:\;
(\forall i\in I)\:\;F_i(L_ix)=p_i.
\end{equation}
\end{problem}

The work of \cite{Eusi20,Ibap21} assumes that the prescription 
equations in Problem~\ref{prob:1} are exact and hence that a
solution exists. In many instances, however, the prescription
operators may be imperfectly known or the model may be corrupted
by perturbations, so that Problem~\ref{prob:1} may not have 
solutions, e.g., \cite{Cens18,Sign94,Fouc18}. 
A dramatic consequence of this lack of feasibility is
that the algorithms proposed \cite{Eusi20,Ibap21} are known to
diverge in such situations. To deal robustly with possibly
inconsistent equations, one must therefore come up with an
appropriate relaxed formulation of Problem~\ref{prob:1}, i.e.,
one that seeks a point in $C$ that satisfies the nonlinear
equations in an approximate sense, and coincides with
the original problem \eqref{e:prob1} if it happens to be
consistent. To guide our design of a relaxed problem, let us
consider a classical instantiation of Problem~\ref{prob:1}.

\begin{example}
\label{ex:1}
Specialize Problem~\ref{prob:1} by setting, for every $i\in I$,
\begin{equation}
\label{e:10}
p_i=0\;\;\text{and}\;\;F_i=\Id-\proj_{D_i},\;\;\text{where $D_i$
is a nonempty closed convex subset of $\GG_i$},
\end{equation}
and note that the operators $(F_i)_{i\in I}$ are firmly
nonexpansive \cite[Corollary~4.18]{Livre1}. In this context,
\eqref{e:prob1} reduces to the convex feasibility problem 
\cite{Cens94,Aiep96,Youl82} 
\begin{equation}
\label{e:prob3}
\text{find}\:\;x\in C\:\;\text{such that}\:\;
(\forall i\in I)\:\;L_ix\in D_i.
\end{equation}
Let $(\omega_i)_{i\in I}$ be real numbers in $\rzeroun$ such that
$\sum_{i\in I}\omega_i=1$ and, for every $i\in I$, let $d_{D_i}$ be
the distance function to $D_i$. As seen in \cite{Siim19} (see also
\cite{Cens05,Cens18,Sign94,Sign99,Gold85,Youl86} for special
cases), a relaxation of \eqref{e:prob3} when it may be inconsistent
is the least-squares problem 
\begin{equation}
\label{e:prob4}
\minimize{x\in C}{f(x)},\quad\text{where}\quad 
f\colon x\mapsto\frac{1}{2}\sum_{i\in I}\,\omega_id_{D_i}^2(L_ix)
=\frac{1}{2}\sum_{i\in I}\,\omega_i\|L_ix-\proj_{D_i}(L_ix)\|^2.
\end{equation}
An important property of this formulation is that $f$ is a smooth
convex function since \cite[Corollary~12.31]{Livre1} asserts that
\begin{equation}
\label{e:fm1}
(\forall i\in I)\quad
\nabla\dfrac{d_{D_i}^2\circ L_i}{2}=
L_i^*\circ(\Id-\proj_{D_i})\circ L_i=
L_i^*\circ F_i\circ L_i-L_i^*p_i.
\end{equation}
It can therefore be solved by the
projection-gradient algorithm \cite[Corollary~28.10]{Livre1}.
Let us also note that \eqref{e:prob4} is a valid relaxation of
\eqref{e:prob3}. Indeed, if the latter has solutions, then $f$
vanishes on $C$ at those points only, and \eqref{e:prob4} is
therefore equivalent to \eqref{e:prob3}.
Historically, the first instance of the above relaxation process
seems to be Legendre's least-squares methods \cite{Lege05}. There,
$\HH=\RR^N=C$ and, for every $i\in I$, $\GG_i=\RR$,
$D_i=\{\beta_i\}$, and $L_i=\scal{\cdot}{a_i}$, where 
$\beta_i\in\RR$ and $0\neq a_i\in\RR^N$. 
Set $b=(\beta_i)_{i\in I}$, let $A$ be the matrix with 
rows $(a_i)_{i\in I}$, and let
$(\forall i\in I)$ $\omega_i=1/\card I$. Then \eqref{e:prob3}
consists of solving the linear system $Ax=b$ and \eqref{e:prob4} of
minimizing the function $x\mapsto\|Ax-b\|^2$.
\end{example}

In general, there is no suitable relaxation of Problem~\ref{prob:1}
in the form of a tractable convex minimization problem such as
\eqref{e:prob4}. For instance, in Example~\ref{ex:1}, we can 
rewrite \eqref{e:prob4} as
\begin{equation}
\label{e:prob14}
\minimize{x\in C}{f(x)},\quad\text{where}\quad 
f\colon x\mapsto\frac{1}{2}\sum_{i\in I}\,\omega_i
\|F_i(L_ix)-p_i\|^2.
\end{equation}
However, beyond the special case \eqref{e:10}, $f$ is typically a
nonconvex and nondifferentiable function
\cite{Avil17,Pete19,Zarz09}, which makes it impossible to 
guarantee the construction of solutions. Another plausible
formulation that captures \eqref{e:prob4} would be to introduce in
Problem~\ref{prob:1} the closed convex sets
$(\forall i\in I)$ $D_i=\menge{y\in\GG_i}{F_iy_i=p_i}$.
However the resulting minimization problem \eqref{e:prob4} is
intractable because we typically do not know how to evaluate the
operators $(\proj_{D_i})_{i\in I}$, and therefore cannot evaluate
$f$ and its gradient.

Our strategy to relax \eqref{e:prob1} is to forego the optimization
approach in favor of the broader framework of \emph{variational
inequalities}. To motivate this approach, let us go back to
Example~\ref{ex:1}. Then it follows from Lemma~\ref{l:1} below and
\eqref{e:fm1} that \eqref{e:prob4} equivalent to finding $x\in C$
such that $(\forall y\in C)$ 
$\sum_{i\in I}\omega_i \scal{L_i(y-x)}{F_i(L_ix)-p_i}\geq 0$. 
We shall show that this variational inequality constitutes an
appropriate relaxed formulation of Problem~\ref{prob:1} in the
presence of general firmly nonexpansive operators $(F_i)_{i\in I}$,
and that it can be solved iteratively through an efficient
block-iterative fixed point algorithm. Here is a precise
formulation of our relaxed problem.

\begin{problem}
\label{prob:2}
Let $I$ be a nonempty finite set, let $(\omega_i)_{i\in I}$ be real
numbers in $\rzeroun$ such that $\sum_{i\in I}\omega_i=1$, and let
$C$ be a nonempty closed convex subset of a real Hilbert space
$\HH$. For every $i\in I$, let $\GG_i$ be a real Hilbert space, let
$p_i\in\GG_i$, let $L_i\colon\HH\to\GG_i$ be a nonzero bounded
linear operator, and let $F_i\colon\GG_i\to\GG_i$ be a firmly
nonexpansive operator. The task is to 
\begin{equation}
\label{e:prob2}
\text{find}\:\;x\in C\:\;\text{such that}\:\;
(\forall y\in C)\;\:\sum_{i\in I}\omega_i
\scal{L_i(y-x)}{F_i(L_ix)-p_i}\geq 0.
\end{equation}
\end{problem}

The paper is organized as follows. Section~\ref{sec:2} provides the
notation and the necessary background, as well as preliminary
results. It covers in particular the basics of monotone operator
theory, which will play an essential role in the paper. In
Section~\ref{sec:3}, we illustrate the flexibility and the breadth
the proposed firmly nonexpansive Wiener model. In
Section~\ref{sec:4}, we analyze various properties of
Problem~\ref{prob:2}, in particular as a relaxation of
Problem~\ref{prob:1}. We also provide in that section a
block-iterative algorithm to solve Problem~\ref{prob:2}.
Section~\ref{sec:5} is devoted to numerical experiments in the area
of signal and image processing.

\section{Notation, background, and preliminary results}
\label{sec:2}

\subsection{Notation} 
\label{sec:21}
Our notation follows \cite{Livre1}, to which
one can refer for background on monotone operators and convex
analysis. Let $\HH$ be a real Hilbert space with scalar product
$\scal{\cdot}{\cdot}$, associated norm $\|\cdot\|$, and identity
operator $\Id$. The family of all subsets of $\HH$ is denoted by
$2^{\HH}$. The Hilbert direct sum of a family of real
Hilbert spaces $(\HH_i)_{i\in I}$ is denoted by
$\bigoplus_{i\in I}\HH_i$.

Let $T\colon\HH\to\HH$. Then $T$ is \emph{cocoercive} if there
exists $\beta\in\RPP$ such that 
\begin{equation}
\label{e:coco}
(\forall x\in\HH)(\forall y\in\HH)\quad
\scal{x-y}{Tx-Ty}\geq\beta\|Tx-Ty\|^2,
\end{equation}
and \emph{firmly nonexpansive} if $\beta=1$ above.
The set of \emph{fixed points} of $T$ is 
$\Fix T=\menge{x\in\HH}{Tx=x}$. 

Let $A\colon\HH\to 2^{\HH}$. The \emph{graph} of $A$ is 
$\gra A=\menge{(x,x^*)\in\HH\times\HH}{x^*\in Ax}$, the
\emph{domain} of
$A$ is $\dom A=\menge{x\in\HH}{Ax\neq\emp}$,
the \emph{range} of
$A$ is $\ran A=\menge{x^*\in\HH}{(\exi x\in\HH)\;x^*\in Ax}$,
the set of zeros of $A$ is $\zer A=\menge{x\in\HH}{0\in Ax}$, 
the \emph{inverse} of $A$ is 
$A^{-1}\colon\HH\to 2^{\HH}\colon x^*\mapsto
\menge{x\in\HH}{x^*\in Ax}$, and the \emph{resolvent} of $A$ is
$J_A=(\Id+A)^{-1}$. Further, $A$ is \emph{monotone} if
\begin{equation}
\big(\forall(x,x^*)\in\gra A\big)
\big(\forall(y,y^*)\in\gra A\big)\quad
\scal{x-y}{x^*-y^*}\geq 0,
\end{equation}
and \emph{maximally monotone} if, for every 
$(x,x^*)\in\HH\times\HH$, 
\begin{equation} 
\label{e:maxmon2}
(x,x^*)\in\gra A\quad\Leftrightarrow\quad
\big(\forall (y,y^*)\in\gra A\big)\;\;\scal{x-y}{x^*-y^*}\geq 0.
\end{equation}
If $A$ is maximally monotone, then $J_A$ is a single-valued firmly
nonexpansive operator defined on $\HH$. If $A$ is monotone and
satisfies
\begin{equation}
\label{e:3*mon}
(\forall (x,x^*)\in\dom A\times\ran A)\;\;\sup
\menge{\scal{x-y}{y^*-x^*}}{(y,y^*)\in\gra A}<\pinf,
\end{equation}
then it is \emph{$3^*$ monotone}.

$\Gamma_0(\HH)$ is the class of all lower semicontinuous convex
functions from $\HH$ to $\RX$ which are proper in the sense that
they are not identically $\pinf$. Let $f\in\Gamma_0(\HH)$. The
\emph{domain} of $f$ is $\dom f=\menge{x\in\HH}{f(x)<\pinf}$, the
\emph{conjugate} of $f$ is the function 
\begin{equation}
\label{e:conj}
\Gamma_0(\HH)\ni f^*\colon x^*\mapsto
\sup_{x\in\HH}\big(\scal{x}{x^*}-f(x)\big),
\end{equation}
and the \emph{subdifferential} of $f$ is the maximally monotone
operator
\begin{equation}
\label{e:subdiff}
\partial f\colon\HH\to 2^{\HH}\colon
x\mapsto\menge{x^*\in\HH}{(\forall y\in\HH)\;
\scal{y-x}{x^*}+f(x)\leq f(y)}. 
\end{equation}
The \emph{Moreau envelope} of $f$ is 
\begin{equation}
\label{e:18}
\widetilde{f}\colon\HH\to\RR\colon x\mapsto\inf_{y\in\HH}
\bigg(f(y)+\dfrac{\|x-y\|^2}{2}\bigg).
\end{equation}
For every $x\in\HH$, the infimum in \eqref{e:18} is achieved at a
unique point, which is denoted by $\prox_fx$. This defines
the \emph{proximity operator} $\prox_f=J_{\partial f}$ of $f$.

Let $C$ be a nonempty closed and convex subset of $\HH$. The
\emph{distance} from
$x\in\HH$ to $C$ is $d_C(x)=\inf_{y\in C}\|x-y\|$, the
\emph{indicator function} of $C$ is 
\begin{equation}
\label{e:iota}
\iota_C\colon\HH\to\RX\colon x\mapsto 
\begin{cases}
0,&\text{if}\;\:x\in C;\\
\pinf,&\text{if}\;\:x\notin C,
\end{cases}
\end{equation}
the \emph{normal cone} to
$C$ at $x\in\HH$ is 
\begin{equation} 
\label{e:normalcone}
N_Cx=\partial\iota_C(x)=
\begin{cases}
\menge{x^*\in\HH}{(\forall y\in C)\;\;\scal{y-x}{x^*}\leq 0},
&\text{if}\;\;x\in C;\\
\emp,&\text{otherwise,}
\end{cases}
\end{equation}
and the \emph{projection operator} onto $C$ is 
$\proj_C=\prox_{\iota_C}=J_{N_C}$. 

The following facts will also come into play.

\begin{lemma}
\label{l:34}
Let $A\colon\HH\to2^\HH$ be maximally monotone, let $\mu\in\RPP$,
and let $\gamma\in\left]0,1/\mu\right[$. Set $B=A-\mu\Id$ and
$\beta=1-\gamma\mu$. Then $J_{\gamma B}\colon\HH\to\HH$ is
$\beta$-cocoercive. Furthermore,
$J_{\gamma B}=J_{\beta^{-1}\gamma A}\circ(\beta^{-1}\Id)$.
\end{lemma}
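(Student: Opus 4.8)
The plan is to first derive the announced algebraic identity by a direct computation of the resolvent, and then obtain the cocoercivity estimate as a simple rescaling of the firm nonexpansiveness of $J_{\beta^{-1}\gamma A}$.

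First I would unfold the definition $J_{\gamma B}=(\Id+\gamma B)^{-1}$ using $B=A-\mu\Id$. Since $\Id+\gamma B=\Id+\gamma A-\gamma\mu\Id=(1-\gamma\mu)\Id+\gamma A=\beta\Id+\gamma A$, the resolvent of $\gamma B$ is the inverse of $\beta\Id+\gamma A$. The key step is then to factor out the scalar $\beta$: because $\gamma\in\left]0,1/\mu\right[$ we have $\beta=1-\gamma\mu\in\zeroun$, so $\beta^{-1}$ is well defined and positive, and $\beta\Id+\gamma A=\beta(\Id+\beta^{-1}\gamma A)$ as set-valued operators. Applying the elementary rule $(\beta M)^{-1}=M^{-1}\circ(\beta^{-1}\Id)$ valid for any nonzero scalar $\beta$ (which follows at once from $u\in(\beta M)^{-1}v\Leftrightarrow\beta^{-1}v\in Mu$) with $M=\Id+\beta^{-1}\gamma A$ yields $J_{\gamma B}=(\Id+\beta^{-1}\gamma A)^{-1}\circ(\beta^{-1}\Id)=J_{\beta^{-1}\gamma A}\circ(\beta^{-1}\Id)$, which is the second assertion.

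Next I would exploit this factorization to get cocoercivity and, along the way, single-valuedness and full domain. Since $A$ is maximally monotone and $\beta^{-1}\gamma>0$, the operator $\beta^{-1}\gamma A$ is maximally monotone, so by the fact recalled in Section~\ref{sec:21} its resolvent $J_{\beta^{-1}\gamma A}$ is a single-valued firmly nonexpansive operator defined on all of $\HH$; composing with the scaling $\beta^{-1}\Id$ therefore makes $J_{\gamma B}$ a genuine single-valued operator from $\HH$ to $\HH$. Writing $R=J_{\beta^{-1}\gamma A}$, for every $x$ and $y$ in $\HH$ the firm nonexpansiveness of $R$ gives $\scal{\beta^{-1}x-\beta^{-1}y}{R(\beta^{-1}x)-R(\beta^{-1}y)}\geq\|R(\beta^{-1}x)-R(\beta^{-1}y)\|^2$. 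Multiplying through by $\beta>0$ and recognizing $R(\beta^{-1}x)=J_{\gamma B}x$ produces exactly $\scal{x-y}{J_{\gamma B}x-J_{\gamma B}y}\geq\beta\|J_{\gamma B}x-J_{\gamma B}y\|^2$, which is the desired $\beta$-cocoercivity.

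I do not anticipate a genuine obstacle here, since the argument is essentially a change of variables. The only points requiring care are purely bookkeeping: verifying that $\gamma<1/\mu$ forces $\beta>0$ (so that scaling by $\beta$ preserves the inequality and $\beta^{-1}\gamma A$ remains monotone), and carrying out the scalar pull-out $(\beta M)^{-1}=M^{-1}\circ(\beta^{-1}\Id)$ at the level of set-valued operators rather than assuming single-valuedness in advance. Everything else reduces to the firm nonexpansiveness of $J_{\beta^{-1}\gamma A}$ supplied by maximal monotonicity.
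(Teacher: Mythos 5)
Your argument is correct and follows essentially the same route as the paper: you derive the identity $J_{\gamma B}=J_{\beta^{-1}\gamma A}\circ(\beta^{-1}\Id)$ by unfolding $\Id+\gamma B=\beta\Id+\gamma A$ and pulling out the scalar $\beta$, exactly as the paper does via its chain of equivalences, and you correctly note that $\beta=1-\gamma\mu\in\zeroun$ and that single-valuedness with full domain follows from the maximal monotonicity of $\beta^{-1}\gamma A$. The only difference is cosmetic: where the paper invokes \cite[Corollary~23.26]{Livre1} to get $\beta$-cocoercivity, you inline the underlying scaling argument (multiplying the firm nonexpansiveness inequality for $J_{\beta^{-1}\gamma A}$ by $\beta$), which is a valid and self-contained substitute.
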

\begin{proof}
Let $x$ and $q$ be in $\HH$. Since $\beta^{-1}\gamma A$ is 
maximally monotone, its resolvent is single-valued with domain
$\HH$. Therefore, 
\begin{align}
q\in J_{\gamma B}x&\Leftrightarrow x-q\in\gamma Bq
\nonumber\\
&\Leftrightarrow x-\beta q\in\gamma Aq
\nonumber\\
&\Leftrightarrow\beta^{-1}x-q
\in\beta^{-1}\gamma Aq\nonumber\\
&\Leftrightarrow
q=J_{\beta^{-1}\gamma A}\big(\beta^{-1}x\big),
\end{align}
which shows that 
$J_{\gamma B}=J_{\beta^{-1}\gamma A}\circ(\beta^{-1}\Id)$
is single-valued with domain $\HH$.
Finally, since $M=\beta\gamma A$ is maximally monotone, it follows
from \cite[Corollary~23.26]{Livre1} that 
$J_{\gamma B}=J_{\beta^{-2}M}\circ(\beta^{-1}\Id)$ is
$\beta$-cocoercive.
\end{proof}

\begin{lemma}%
[\hspace{1sp}{{\cite[Proposition~24.68]{Livre1}}}]
\label{l:m1}
Let $\HH$ be the real Hilbert space of $N\times M$ matrices
under the Frobenius norm, and set $s=\min\{N,M\}$. Denote the 
singular value decomposition of $x\in\HH$ by 
$x=U_x\:\diag(\sigma_1(x),\ldots,\sigma_s(x))V_x^\top$. Let
$\phi\in\Gamma_0(\RR)$ be even, and set
\begin{equation}
\label{e:Fm}
F\colon\HH\to\HH\colon x\mapsto U_x\:\diag
\Big(\prox_{\phi}\big(\sigma_1(x)\big),\ldots,\prox_{\phi}
\big(\sigma_s(x)\big)\Big)V_x^\top.
\end{equation}
Then $F$ is firmly nonexpansive.
\end{lemma}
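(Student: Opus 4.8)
The plan is to realize $F$ as the proximity operator of a suitable spectral function on $\HH$, and then to invoke the fact recorded above that the proximity operator of any function in $\Gamma_0(\HH)$---being the resolvent of its maximally monotone subdifferential---is firmly nonexpansive. Concretely, I would introduce the singular value function
\begin{equation*}
\Phi\colon\HH\to\RX\colon x\mapsto\sum_{i=1}^s\phi\big(\sigma_i(x)\big)
\end{equation*}
and aim to prove the two facts that $\Phi\in\Gamma_0(\HH)$ and that $\prox_\Phi=F$. Granting these, since $\prox_\Phi=J_{\partial\Phi}$ is the resolvent of the maximally monotone operator $\partial\Phi$, it is firmly nonexpansive, which is exactly the assertion about $F$.

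First I would establish that $\Phi\in\Gamma_0(\HH)$. Properness and lower semicontinuity follow from those of $\phi$ together with the continuity of the singular value maps $x\mapsto\sigma_i(x)$. The substantive point is convexity: because $\phi$ is even and convex, $\Phi$ is a unitarily invariant convex function of the singular values, which I would derive from the theory of spectral functions of singular values (equivalently, from the convexity of $x\mapsto\sum_{i=1}^k\sigma_i(x)$ for each $k$, a consequence of von Neumann's and Ky Fan's results).

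Then I would compute $\prox_\Phi$. Writing $\|x-y\|^2=\|x\|^2-2\scal{x}{y}+\sum_i\sigma_i^2(y)$, the Moreau problem reduces to minimizing over $y$ the quantity $\sum_i\phi(\sigma_i(y))+\tfrac12\sum_i\sigma_i^2(y)-\scal{x}{y}$. Here the crux is von Neumann's trace inequality $\scal{x}{y}\leq\sum_i\sigma_i(x)\sigma_i(y)$, with equality precisely when $x$ and $y$ share an ordered system of singular vectors. This inequality decouples the problem: for any prescribed singular values of $y$, the objective is smallest when $y$ is assembled from the same factors $U_x,V_x$ as $x$, whereupon it separates into the $s$ scalar problems $\min_{\tau}\big(\phi(\tau)+\tfrac12(\tau-\sigma_i(x))^2\big)$. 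Because $\phi$ is even and each $\sigma_i(x)\geq 0$, the scalar minimizer $\prox_\phi(\sigma_i(x))$ is nonnegative and hence admissible as a singular value; collecting these yields $\prox_\Phi(x)=U_x\diag(\prox_\phi(\sigma_1(x)),\ldots,\prox_\phi(\sigma_s(x)))V_x^\top=F(x)$.

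I expect the main obstacle to be this decoupling step: rigorously exploiting von Neumann's trace inequality and its equality case in the presence of repeated singular values and the attendant non-uniqueness of the singular value decomposition, and verifying that the even symmetry of $\phi$ forces the optimal scalar values to remain nonnegative so that they legitimately reassemble into a valid singular value decomposition of $F(x)$.
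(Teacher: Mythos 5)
The paper offers no proof of this lemma, citing instead \cite[Proposition~24.68]{Livre1}, and the proof given there is precisely the argument you outline: realize $F$ as $\prox_\Phi$ for the spectral function $\Phi\colon x\mapsto\sum_{i=1}^s\phi(\sigma_i(x))$, show $\Phi\in\Gamma_0(\HH)$ using the convexity theory of absolutely symmetric functions of singular values, and identify the minimizer in the Moreau envelope by decoupling via von Neumann's trace inequality, with the evenness of $\phi$ guaranteeing that $\prox_\phi(\sigma_i(x))\geq 0$ so the scalar solutions reassemble into a valid singular value decomposition. Your proposal is correct and essentially the same as the cited proof.
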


\subsection{Variational inequalities}
\label{sec:22}

The following notion of a \emph{variational inequality} was
formulated in \cite{Brow65} (see Figure~\ref{fig:0}). 

\begin{definition}
\label{d:vi}
Let $C$ be a nonempty closed convex set of $\HH$ and let
$B\colon\HH\to\HH$ be a monotone operator. The associated
\emph{variational inequality} problem is to 
\begin{equation}
\label{e:0}
\text{find}\;\;x\in C\:\;\text{such that}\:\;
(\forall y\in C)\:\;\scal{y-x}{Bx}\geq 0.
\end{equation}
\end{definition}

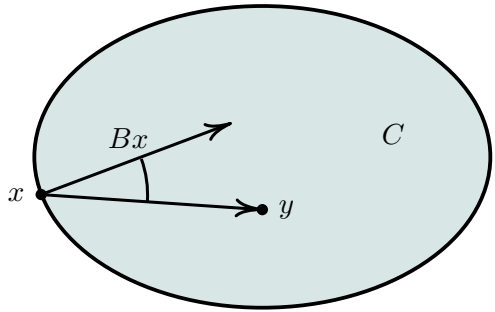
\begin{figure}[h!tb]
\begin{center}
\begin{tikzpicture}
\definecolor{color2b}{rgb}{0.85,0.90,0.90}
\draw[-latex,black,fill=color2b, line width=0.5mm] (0, 0) ellipse
[x radius = 3cm, y radius=2cm, start angle=30, end angle=150];
\draw[] (-2.91,-0.5) node[circle,fill,inner
sep=1.5pt,label=left:$x$](a){}; 
\draw [] (0,-0.7) node[circle,fill,inner
sep=1.5pt,label=right:$y$](b){};
\draw [] (1.3,0.3) node[label=right:$C$](c){};
\draw [] (-2.3,0.25) node[label=right:$Bx$](d){};
\draw [] (-0.4,0.4) node[](e){};
\draw[-{>[scale=1.7,length=6,width=4.2]},line width=0.4mm] 
(-2.91,-0.5 ) to (0,-0.7);
\draw[-{>[scale=1.7,length=6,width=4.2]},line width=0.4mm] 
(-2.91,-0.5 ) to (-0.4,0.45);
\pic [draw,angle radius = 1.4cm,line width=0.4mm,angle 
eccentricity=1.3]{angle = b--a--e};
\end{tikzpicture}
\end{center}
\caption{Illustration of the variational inequality principle.
The point $x$ solves \eqref{e:0} because it lies in $C$ and, for
every $y\in C$, the angle between $y-x$ and $Bx$ is acute.}
\label{fig:0}
\end{figure}

Variational inequalities are used in various areas of mathematics 
and its applications \cite{Bens78,Facc03,Kind80,Zeid90}. They are
also central in constrained minimization problems.

\begin{lemma}{\rm\cite[Proposition~27.8]{Livre1}}
\label{l:1}
Let $f\colon\HH\to\RR$ be a differentiable convex function, 
let $C$ be a nonempty closed convex subset of $\HH$, and let 
$x\in\HH$. Then $x$ minimizes $f$ over $C$ if and only if
it satisfies the variational inequality
\begin{equation}
\label{e:15}
x\in C\:\;\text{and}\:\;
(\forall y\in C)\:\;\scal{y-x}{\nabla f(x)}\geq 0.
\end{equation}
\end{lemma}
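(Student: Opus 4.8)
The plan is to establish the equivalence by proving each implication separately, exploiting in each direction the convexity of exactly one of the two objects in play: the reverse direction rests on the convexity of the function $f$, while the forward direction rests on the convexity of the set $C$ together with the differentiability of $f$.

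First I would treat the reverse implication, namely that the variational inequality \eqref{e:15} forces $x$ to be a minimizer. Since $f$ is convex and differentiable, the gradient inequality $f(y)\geq f(x)+\scal{y-x}{\nabla f(x)}$ holds for every $y\in\HH$. If $x\in C$ satisfies $\scal{y-x}{\nabla f(x)}\geq 0$ for all $y\in C$, then the right-hand side is bounded below by $f(x)$ for every such $y$, whence $f(y)\geq f(x)$ and $x$ minimizes $f$ over $C$.

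For the forward implication, suppose $x\in C$ minimizes $f$ over $C$, and fix an arbitrary $y\in C$. Because $C$ is convex, the segment point $x+t(y-x)=(1-t)x+ty$ lies in $C$ for every $t\in\zeroun$, so minimality gives $f(x+t(y-x))\geq f(x)$, and therefore the difference quotient $(f(x+t(y-x))-f(x))/t\geq 0$. Letting $t\downarrow 0$ and invoking differentiability of $f$ at $x$, this difference quotient converges to the directional derivative $\scal{y-x}{\nabla f(x)}$, which is thus nonnegative; since $y\in C$ was arbitrary, the variational inequality \eqref{e:15} follows.

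The only genuinely delicate point is the passage to the limit in the forward direction: one must justify that the one-sided difference quotient along the segment converges to $\scal{y-x}{\nabla f(x)}$, which is precisely the Gateaux differentiability of $f$ at $x$ in the direction $y-x$ (a consequence of the assumed Fréchet differentiability). Everything else reduces to the gradient inequality and the convexity of $C$, both of which are routine. I would note in passing that the gradient inequality used in the reverse direction can itself be obtained by the same limiting argument applied to the convexity estimate $f(x+t(y-x))\leq(1-t)f(x)+tf(y)$, so the two implications are, in effect, two readings of a single monotonicity-of-difference-quotients computation.
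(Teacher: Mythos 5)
Your proof is correct and complete. Note, however, that the paper itself offers no proof of this lemma: it is stated as a quotation of \cite[Proposition~27.8]{Livre1}, so there is no in-paper argument to compare against. The proof in that reference proceeds through subdifferential calculus: $x$ minimizes $f$ over $C$ iff $x$ minimizes $f+\iota_C$, iff (by Fermat's rule and the sum rule, which applies since $f$ is real-valued and differentiable) $0\in\nabla f(x)+N_Cx$, i.e., $-\nabla f(x)\in N_Cx$, which by the definition \eqref{e:normalcone} of the normal cone is exactly the variational inequality \eqref{e:15}. Your argument is the more elementary, self-contained route: the gradient inequality for the reverse implication, and one-sided difference quotients along segments of $C$ for the forward implication. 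What the subdifferential route buys is immediate generalization to nonsmooth $f$ (replace $\nabla f(x)$ by $\partial f(x)$) and to the monotone-inclusion viewpoint that the rest of the paper relies on (cf.\ \eqref{e:za76}); what your route buys is independence from the sum rule for subdifferentials and a proof readable with only first principles. Your identification of the limit passage as the one delicate point is apt, and your justification via Gateaux differentiability in the direction $y-x$ is exactly what is needed.
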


\subsection{Composite sums of monotone operators}

We shall require the following Br\'ezis--Haraux-type theorem, which
remains valid in general reflexive Banach spaces (see
\cite[Th\'eor\`eme~3]{Breh76} for the special case of the sum 
of two monotone operators).

\begin{lemma}
\label{l:2}
Let $\HH$ be a real Hilbert space and let 
$(\GG_i)_{i\in I}$ be a finite family of real Hilbert
spaces. Let $A\colon\HH\to 2^{\HH}$ be a $3^*$ monotone operator
and, for every $i\in I$, let $B_i\colon\GG_i\to 2^{\GG_i}$ be a
$3^*$ monotone operator and let $L_i\colon\HH\to\GG_i$ be a bounded
linear operator. Suppose that 
$A+\sum_{i\in I}L_i^*\circ B_i\circ L_i$ is maximally monotone.
Then 
\begin{equation}
\label{e:p24}
\begin{cases}
\inte\big(\ran A+\sum_{i\in I}L_i^*(\ran B_i)\big)=
\inte\ran\big(A+\sum_{i\in I}L_i^*\circ B_i\circ L_i\big)\\[3mm]
\overline{\ran A+\sum_{i\in I}L_i^*(\ran B_i)}=
\cran\big(A+\sum_{i\in I}L_i^*\circ B_i\circ L_i\big).
\end{cases}
\end{equation}
\end{lemma}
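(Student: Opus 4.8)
The plan is to lift the composite sum to a single monotone operator on a product space, where the statement reduces to the two-operator Brézis--Haraux theorem recalled from \cite{Breh76}. Set $\GGG=\bigoplus_{i\in I}\GG_i$ and $\KKK=\HH\oplus\GGG$, let $M\colon\HH\to\KKK\colon x\mapsto(x,(L_ix)_{i\in I})$, so that $M$ is a bounded linear operator with adjoint $M^*\colon(x,(y_i)_{i\in I})\mapsto x+\sum_{i\in I}L_i^*y_i$, and introduce the operator $\mathbf A\colon\KKK\to2^{\KKK}$ acting as $A$ on the $\HH$-factor and as $B_i$ on each $\GG_i$-factor, so that $\ran\mathbf A=\ran A\times\Cart_{i\in I}\ran B_i$. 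A direct computation gives $M^*\circ\mathbf A\circ M=A+\sum_{i\in I}L_i^*\circ B_i\circ L_i=:T$, which is maximally monotone by hypothesis, and $M^*(\ran\mathbf A)=\ran A+\sum_{i\in I}L_i^*(\ran B_i)$. Because $\mathbf A$ is separable across the factors, its $3^*$ monotonicity follows at once from that of $A$ and of the operators $(B_i)_{i\in I}$.

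Next I would bring in the closed vector subspace $V=\gra L=\ran M$ of $\KKK$ and its normal-cone operator $N_V=\partial\iota_V$, which is maximally monotone, $3^*$ monotone (being the subdifferential of a function in $\Gamma_0(\KKK)$), and satisfies $\ran N_V=V^\perp=\ker M^*$. The role of $V$ is to convert the composition into the range of a genuine sum: using that $M^*$ is surjective (note $M^*(x,0)=x$) and annihilates $\ran N_V=\ker M^*$, one checks that $\ran(\mathbf A+N_V)=(M^*)^{-1}(\ran T)$ and $\ran\mathbf A+\ran N_V=\ran\mathbf A+\ker M^*=(M^*)^{-1}(M^*(\ran\mathbf A))=(M^*)^{-1}\big(\ran A+\sum_{i\in I}L_i^*(\ran B_i)\big)$.

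The hinge of the argument---and the step I expect to be the main obstacle---is to show that $\mathbf A+N_V$ is itself maximally monotone, so that the two-operator theorem applies to the pair $(\mathbf A,N_V)$. Since both operators are monotone, by Minty's theorem it suffices to prove that $\Id+\mathbf A+N_V$ is surjective. Solving $(\xi,\eta)\in\zeta+\mathbf A\zeta+N_V\zeta$ forces $\zeta=Mx\in V$, and after applying $M^*$ and using $M^*M=\Id+\sum_{i\in I}L_i^*L_i$ this reduces to the inclusion $M^*(\xi,\eta)\in\big(\Id+\sum_{i\in I}L_i^*L_i+T\big)x$ in $\HH$; conversely, any such $x$ lifts back to a solution because the leftover lies in $\ker M^*=V^\perp$. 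Thus surjectivity of $\Id+\mathbf A+N_V$ is equivalent to surjectivity of $\Id+\big(T+\sum_{i\in I}L_i^*L_i\big)$, and the latter holds by Minty's theorem, since $T+\sum_{i\in I}L_i^*L_i$ is maximally monotone, being the sum of the maximally monotone operator $T$ and the bounded, everywhere-defined, monotone linear operator $\sum_{i\in I}L_i^*L_i$.

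Finally, I would invoke the two-operator Brézis--Haraux theorem for $(\mathbf A,N_V)$ to obtain $\cran(\mathbf A+N_V)=\overline{\ran\mathbf A+\ran N_V}$ and $\inte\ran(\mathbf A+N_V)=\inte(\ran\mathbf A+\ran N_V)$, and then push these identities down to $\HH$ through the surjection $M^*$. Because $M^*$ is a continuous open linear surjection (open mapping theorem), it satisfies $\overline{(M^*)^{-1}(S)}=(M^*)^{-1}(\overline S)$ and $\inte(M^*)^{-1}(S)=(M^*)^{-1}(\inte S)$ for every $S\subseteq\HH$; moreover, $M^*$ being surjective, $(M^*)^{-1}(S_1)=(M^*)^{-1}(S_2)$ forces $S_1=S_2$. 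Substituting the range identities of the second paragraph and cancelling $(M^*)^{-1}$ then yields exactly $\cran T=\overline{\ran A+\sum_{i\in I}L_i^*(\ran B_i)}$ and $\inte\ran T=\inte\big(\ran A+\sum_{i\in I}L_i^*(\ran B_i)\big)$, which are the two assertions of \eqref{e:p24}.
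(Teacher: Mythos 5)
Your proof is correct, but it takes a genuinely different route from the paper's. Both arguments begin with the same product-space lift: the paper forms $\boldsymbol{L}\colon x\mapsto(x,L_1x,\ldots,L_mx)$ and the separable operator $\boldsymbol{M}$, observes that $\boldsymbol{L}^*\circ\boldsymbol{M}\circ\boldsymbol{L}$ is the operator in question, and then concludes in one stroke by citing Pennanen's range theorem for monotone \emph{composite} mappings \cite[Theorem~5]{Penn01}, which is precisely a Br\'ezis--Haraux statement for $L^*\circ M\circ L$. You instead refuse the composite-range theorem and rederive it from the classical two-operator result of \cite{Breh76}: you encode the composition as the sum $\mathbf A+N_V$ with $V=\ran M$, establish maximal monotonicity of that sum by reducing Minty surjectivity to that of $\Id+T+\sum_{i\in I}L_i^*L_i$ (correct, since adding the bounded everywhere-defined monotone linear operator $\sum_{i\in I}L_i^*L_i$ preserves maximal monotonicity), apply the two-operator theorem to the pair $(\mathbf A,N_V)$, and transport the identities back through the open surjection $M^*$ using $\ran(\mathbf A+N_V)=(M^*)^{-1}(\ran T)$ and $\ran\mathbf A+\ran N_V=(M^*)^{-1}\big(\ran A+\sum_{i\in I}L_i^*(\ran B_i)\big)$. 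All the auxiliary claims check out: $V$ is closed because $M$ is bounded below, $N_V$ is $3^*$ monotone as the subdifferential of $\iota_V$, and the commutation of $(M^*)^{-1}$ with closures and interiors does follow from the open mapping theorem. What the paper's citation buys is brevity; what your argument buys is self-containment modulo the original 1976 two-operator theorem --- in effect you have reproved the special case of Pennanen's result that is needed here, at the cost of the Minty verification and the topological push-down, both of which you handle correctly.
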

\begin{proof}
Clearly, $\ran(A+\sum_{i\in I}L_i^*\circ B_i\circ L_i)
\subset(\ran A+\sum_{i\in I}L_i^*(\ran B_i))$. It is therefore
enough to show that 
\begin{equation}
\label{e:p25}
\begin{cases}
\inte\big(\ran A+\sum_{i\in I}L_i^*(\ran B_i)\big)\subset
\ran\big(A+\sum_{i\in I}L_i^*\circ B_i\circ L_i\big)\\
\ran A+\sum_{i\in I}L_i^*(\ran B_i)\subset
\cran\big(A+\sum_{i\in I}L_i^*\circ B_i\circ L_i\big).
\end{cases}
\end{equation}
Without loss of generality, set $I=\{1,\ldots,m\}$ and introduce
the Hilbert direct sum
$\HHH=\HH\oplus\GG_1\oplus\cdots\oplus\GG_m$. Furthermore, 
introduce the bounded linear operator
$\boldsymbol{L}\colon\HH\to\HHH\colon x\mapsto(x,L_1x,\ldots,L_mx)$
and the operator 
$\boldsymbol{M}\colon\HHH\to 2^{\HHH}\colon (x,y_1,\ldots,y_m)
\mapsto Ax\times B_1y_1\times\cdots\times B_my_m$, which is
$3^*$ monotone since $A$ and $(B_i)_{i\in I}$ are. Note also that,
since $\boldsymbol{L}^*\colon\HHH\to\HH\colon
(x,y_1,\ldots,y_m)\mapsto x+\sum_{i\in I}L_i^*y_i$,
the operator
\begin{equation}
\boldsymbol{L}^*\circ \boldsymbol{M}\circ \boldsymbol{L}=
A+\sum_{i\in I}L_i^*\circ B_i\circ L_i 
\end{equation}
is maximally monotone. 
We can therefore apply \cite[Theorem~5]{Penn01} to obtain 
\begin{equation}
\begin{cases}
\inte\boldsymbol{L}^*(\ran\boldsymbol{M})\subset
\ran(\boldsymbol{L}^*\circ\boldsymbol{M}\circ\boldsymbol{L})\\
\boldsymbol{L}^*(\ran\boldsymbol{M})\subset
\cran(\boldsymbol{L}^*\circ\boldsymbol{M}\circ\boldsymbol{L}),
\end{cases}
\end{equation}
which is precisely \eqref{e:p25}.
\end{proof}

We consider below a monotone inclusion problem involving several
operators.

\begin{problem}
\label{prob:20}
Let $(\omega_i)_{i\in I}$ be a finite family of real numbers in
$\rzeroun$ such that $\sum_{i\in I}\omega_i=1$, let
$A_0\colon\HH\to 2^{\HH}$ be maximally monotone and, for every
$i\in I$, let $\beta_i\in\RPP$ and let $A_i\colon\HH\to\HH$ be
$\beta_i$-cocoercive. The task is to find $x\in\HH$ such that $0\in
A_0x+\sum_{i\in I}\omega_i A_ix$. 
\end{problem}

\begin{proposition}{\rm\cite[Proposition~4.9]{Anon21}}
\label{p:20}
Consider the setting of Problem~\ref{prob:20} under
the assumption that it has a solution. Let $K$ be a strictly 
positive integer and let $(I_n)_{n\in\NN}$ be a sequence of 
nonempty subsets of $I$ such that $(\forall n\in\NN)$ 
$\bigcup_{k=0}^{K-1}I_{n+k}=I$.
Let $\gamma\in\left]0,2\min_{1\leq i\leq m}\beta_i\right[$,
let $x_0\in\HH$, and let $(\forall i\in I)$ $t_{i,-1}\in\HH$.
Iterate
\begin{equation}
\label{e:a30}
\begin{array}{l}
\text{for}\;n=0,1,\ldots\\
\left\lfloor
\begin{array}{l}
\text{for every}\;i\in I_n\\
\left\lfloor
\begin{array}{l}
t_{i,n}=x_n-\gamma A_ix_n\\
\end{array}
\right.\\
\text{for every}\;i\in I\smallsetminus I_n\\
\left\lfloor
\begin{array}{l}
t_{i,n}=t_{i,n-1}\\
\end{array}
\right.\\[1mm]
x_{n+1}=J_{\gamma A_0}\Bigg(\Sum_{i\in I}\omega_it_{i,n}\Bigg).
\end{array}
\right.\\
\end{array}
\end{equation}
Then $(x_n)_{n\in\NN}$ converges weakly to a solution to
Problem~\ref{prob:20}.
\end{proposition}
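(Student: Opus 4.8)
The plan is to recognize the recursion \eqref{e:a30} as a block-iterative, delayed version of the forward--backward splitting algorithm applied to the inclusion $0\in A_0x+Bx$, where $B=\sum_{i\in I}\omega_iA_i$, and to establish its convergence through a Fej\'er-monotonicity argument. First I would reduce the problem to a fixed point problem. Each $A_i$ is $\beta_i$-cocoercive, hence also $\beta$-cocoercive with $\beta=\min_{i\in I}\beta_i$; applying the convexity of $\|\cdot\|^2$ together with $\sum_{i\in I}\omega_i=1$ then gives, for all $x,y$,
\[
\scal{x-y}{Bx-By}=\sum_{i\in I}\omega_i\scal{x-y}{A_ix-A_iy}\ge\beta\sum_{i\in I}\omega_i\|A_ix-A_iy\|^2\ge\beta\|Bx-By\|^2,
\]
so $B$ is $\beta$-cocoercive, in particular $\beta^{-1}$-Lipschitz and maximally monotone with full domain, whence $A_0+B$ is maximally monotone. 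A point $x$ solves Problem~\ref{prob:20} if and only if $x\in\Fix\big(J_{\gamma A_0}\circ(\Id-\gamma B)\big)$, and $\gamma\in\left]0,2\beta\right[$ is precisely the step-size range for which the fully activated iteration ($I_n=I$) reduces to the classical convergent forward--backward method.

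Next I would encode the block structure. The control condition $\bigcup_{k=0}^{K-1}I_{n+k}=I$ ensures that every index is activated at least once in each window of $K$ consecutive iterations, so for every $i$ and $n$ there is an instant $d_i(n)\in\{n-K+1,\dots,n\}$ with $t_{i,n}=x_{d_i(n)}-\gamma A_ix_{d_i(n)}$; the delays are thus bounded by $K-1$. Writing $u_n=\sum_{i\in I}\omega_it_{i,n}$, we have $x_{n+1}=J_{\gamma A_0}u_n$ and $u_n=x_n-\gamma Bx_n+e_n$, where the error $e_n=\sum_{i\in I}\omega_i\big((x_{d_i(n)}-x_n)-\gamma(A_ix_{d_i(n)}-A_ix_n)\big)$ is controlled, via Lipschitz continuity of the $A_i$, by $\sum_{k=n-K+1}^{n-1}\|x_{k+1}-x_k\|$.

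Fixing a solution $x^*$ and setting $u^*=x^*-\gamma Bx^*$ (so $x^*=J_{\gamma A_0}u^*$), the firm nonexpansiveness of $J_{\gamma A_0}$, Jensen's inequality, and the $\beta_i$-cocoercivity of each $A_i$ yield an estimate of the form
\[
\|x_{n+1}-x^*\|^2\le\sum_{i\in I}\omega_i\|x_{d_i(n)}-x^*\|^2-\delta_n,\qquad\delta_n\ge0,
\]
where $\delta_n$ collects the cocoercivity surplus $\sum_{i\in I}\omega_i\gamma(2\beta_i-\gamma)\|A_ix_{d_i(n)}-A_ix^*\|^2$ and the firm-nonexpansiveness residual $\|(u_n-x_{n+1})-(u^*-x^*)\|^2$. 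The main obstacle is that the right-hand side involves the stale iterates $x_{d_i(n)}$ instead of $x_n$, so no one-step Fej\'er inequality is available. I would absorb the delays by aggregating the stale norms over the length-$K$ window into an augmented Lyapunov sequence (a suitably weighted sum $\sum_{j=0}^{K-1}c_j\|x_{n-j}-x^*\|^2$, or the running maximum $\max_{n-K+1\le k\le n}\|x_k-x^*\|^2$), which a telescoping argument shows to be nonincreasing. This gives boundedness of $(x_n)$ and summability of $(\delta_n)$, whence $A_ix_{d_i(n)}-A_ix^*\to0$, $\|x_{n+1}-x_n\|\to0$, and $e_n\to0$; since the successive differences vanish, $\|x_n-x^*\|$ converges for every solution $x^*$.

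Finally I would pass to the limit. From $u_n-x_{n+1}\in\gamma A_0x_{n+1}$ one rearranges to $y_n\in(A_0+B)x_{n+1}$ with $y_n=\gamma^{-1}(x_n-x_{n+1})-(Bx_n-Bx_{n+1})+\gamma^{-1}e_n\to0$ strongly (using the Lipschitz continuity of $B$ and $\|x_{n+1}-x_n\|\to0$). If $x_n\weakly\bar x$ along a subsequence, then $x_{n+1}\weakly\bar x$ as well, and the weak--strong sequential closedness of the graph of the maximally monotone operator $A_0+B$ forces $0\in(A_0+B)\bar x$, i.e.\ $\bar x$ solves Problem~\ref{prob:20}. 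Combined with the convergence of $\|x_n-x^*\|$ for every solution $x^*$, Opial's criterion then delivers weak convergence of the whole sequence $(x_n)$ to a solution.
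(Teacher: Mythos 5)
This proposition is not proved in the paper: it is imported verbatim from \cite[Proposition~4.9]{Anon21}, so there is no in-paper argument to compare yours against. Your sketch does reconstruct the strategy of that reference: view \eqref{e:a30} as a forward--backward step for $0\in A_0x+Bx$ with $B=\sum_{i\in I}\omega_iA_i$, exploit the bounded delay $d_i(n)\in\{n-K+1,\dots,n\}$ guaranteed by the covering condition \eqref{e:K}, run a quasi-Fej\'er argument ``relative to means,'' and identify weak cluster points by demiclosedness before concluding with Opial. The cocoercivity computation for $B$, the fixed-point characterization of solutions, and the one-step inequality $\|x_{n+1}-x^*\|^2\le\sum_{i\in I}\omega_i\|x_{d_i(n)}-x^*\|^2-\delta_n$ are all correct.

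Two points in the sketch are genuinely underdeveloped. First, of the two Lyapunov candidates you offer, the running maximum $\max_{n-K+1\le k\le n}\|x_k-x^*\|^2$ is nonincreasing but does \emph{not} yield summability of $(\delta_n)$: a majorization of the form $\xi_{n+1}\le\max_{n-K+1\le k\le n}\xi_k-\delta_n$ is compatible with nonsummable $(\delta_n)$. What saves the argument is that the majorant here is a \emph{convex combination} of the window values in which the weight on the most recent iterate is bounded below by $\min_{i\in I}\omega_i>0$ (because $I_n\neq\emp$ forces $d_i(n)=n$ for at least one $i$); it is this structure that the weighted-sum Lyapunov sequence must exploit, and the choice of the coefficients $c_j$ is where the real work lies. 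Second, the chain ``summability of $(\delta_n)$ $\Rightarrow$ $\|x_{n+1}-x_n\|\to0$ and $e_n\to0$'' is asserted rather than proved: what follows directly is $A_ix_{d_i(n)}\to A_ix^*$ strongly and $x_{n+1}-\sum_{i\in I}\omega_ix_{d_i(n)}\to0$, and one must still argue that the delayed iterates $x_{d_i(n)}$ share the weak cluster points of $(x_n)$ before the weak--strong closedness of the graphs of $A_0$ and the $A_i$ can be invoked. These are precisely the technicalities that the abstract block-iteration framework of \cite{Anon21} is designed to absorb, which is presumably why the authors cite the result rather than reprove it.
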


\section{Firmly nonexpansive Wiener models}
\label{sec:3}

The proposed Wiener model (see Figure~\ref{fig:w}) involves a
linear operator followed by a firmly nonexpansive operator 
acting on a real Hilbert space $\GG$. Typical examples of linear
transformations in the context of signal construction include the
Fourier transform, the Radon transform, wavelet decompositions,
frame decompositions, audio effects, or blurring operators. We show
that firmly nonexpansive operators model many useful nonlinearities
in this context. Key examples based on those of \cite{Ibap21} are
recalled and new ones are proposed. Following \cite{Ibap21}, we
call $p\in\GG$ a \emph{proximal point} of $y\in\GG$ relative to a
firmly nonexpansive operator $F\colon\GG\to\GG$ if $Fy=p$. 

\subsection{Projection operators}
\label{sec:31}

As seen in Section~\ref{sec:21}, the projection operator onto a
nonempty closed convex set is firmly nonexpansive.

\begin{example}
\label{ex:hc}
For every $j\in\{1,\ldots,m\}$, let $\mathsf{G}_j$ be a real 
Hilbert space and let $\mathsf{D}_j\subset\mathsf{G}_j$ be nonempty
closed and convex. Suppose that 
$\GG=\bigoplus_{1\leq j\leq m}\mathsf{G}_j$.
The operator
\begin{equation}
\label{e:hc}
F\colon(\mathsf{y}_j)_{1\leq j\leq m}
\mapsto (\proj_{\mathsf{D}_j}\mathsf{y}_j)_{1\leq j\leq m},
\end{equation}
which is also the projection onto the closed convex set
$D=\bigtimes_{1\leq j\leq m}\mathsf{D}_j$, is the hard clipper of
\cite[Example~2.11]{Ibap21}. If we specialize to the case when, for
every $j\in\{1,\ldots,m\}$, $\mathsf{G}_j=\RR$, we obtain the
standard hard clipping operators of \cite{Abel91,Fouc18,Tesh19}.
\end{example}

\begin{example}
\label{ex:k}
Let $K\subset\GG$ be a nonempty closed convex cone. The operator 
$F=\proj_{K}$ is used as a distortion model when $K$ is the 
positive orthant \cite[Section~10.4.1]{Tarr19}. Another instance
of a conic projection operator arises in isotonic regression 
\cite{Barl72}, where $K=\menge{(\xi_i)_{1\leq i\leq N}\in\RR^N}
{\xi_1\leq\cdots\leq\xi_N}$.
\end{example}

\begin{example}
Compression schemes such as downsampling project a high-dimensional
object of interest onto a closed convex subset of a low-dimensional
subspace of $\GG$ \cite{Nasr14}.
\end{example}

\subsection{Proximity operators}
\label{sec:32}

As seen in Section~\ref{sec:21}, the proximity operator of a
function in $\Gamma_0(\GG)$ is firmly nonexpansive. The following
construction will be particularly useful.

\begin{example}
\label{ex:80}
For every $j\in\{1,\ldots,m\}$, let $\mathsf{G}_j$ be a real 
Hilbert space and let $\mathsf{g}_j\in\Gamma_0(\mathsf{G}_j)$. 
Suppose that $\GG=\bigoplus_{1\leq j\leq m}\mathsf{G}_j$ and set
$F\colon\GG\to\GG\colon 
(\mathsf{y}_j)_{1\leq j\leq m}\mapsto
(\prox_{\mathsf{g}_i}\mathsf{y}_j)_{1\leq j\leq m}$.
Then \cite[Proposition~24.11]{Livre1} asserts that
\begin{equation}
\label{e:79}
F=\prox_g,\quad\text{where}\quad
g\colon\GG\to\left]-\infty,+\infty\right]\colon
(\mathsf{y}_j)_{1\leq j\leq m}\mapsto\sum_{j=1}^m 
\mathsf{g}_j(\mathsf{y}_j).
\end{equation}
\end{example}

\begin{example}
\label{ex:81}
In Example~\ref{ex:80} suppose that, for every 
$j\in\{1,\ldots,m\}$, $\mathsf{g}_j=\phi_j\circ\|\cdot\|$, where
$\phi_j$ is an even function in $\Gamma_0(\RR)$ such that 
$\phi_j(0)=0$ and $\phi_j\neq\iota_{\{0\}}$.
Set $(\forall j\in\{1,\ldots,m\})$ 
$\rho_j=\text{\rm max}\,\partial\phi_j(0)$. Then we derive from
\cite[Proposition~2.1]{Nmtm09} that
\begin{multline}
\label{e:o1}
F\colon\GG\to\GG\colon (\mathsf{y}_j)_{1\leq j\leq m}\mapsto
\Big(\big(\prox_{\phi_j}\|\mathsf{y}_j\|\big)
\lfloor\mathsf{y}_j\rfloor_{\rho_j}\Big)_{1\leq j\leq m},\\
\text{where}\quad\lfloor\mathsf{y}_j\rfloor_{\rho_j}=
\begin{cases}
{\mathsf{y}}_j/\|{\mathsf{y}}_j\|,&\text{if}
\;\;\|\mathsf{y}_j\|>\rho_j;\\
\mathsf{0},&\text{if}\;\;\|\mathsf{y}_j\|\leq\rho_j.
\end{cases}
\end{multline}
\end{example}

\begin{example}
\label{ex:82}
Consider the special case of Example~\ref{ex:81} in which, for some
$j\in\{1,\ldots,m\}$, $\phi_j$ is not differentiable at the origin,
which implies that $\rho_j>0$. Then $\prox_{\mathsf{g}_j}$ acts as
a thresholder with respect to the $j$th variable in the sense that,
if $\|\mathsf{y}_j\|\leq\rho_j$, then the $j$th coordinate of $Fy$
is zero. For instance, suppose that, for every 
$j\in\{1,\ldots,m\}$, $\phi_j=\rho_j|\cdot|$, hence
$\partial\phi_j(0)=[-\rho_j,\rho_j]$ and 
$\mathsf{g}_j=\rho_j\|\cdot\|$. Then $Fy=p$ is acquired
though the group-shrinkage operation \cite{Yuan06}
\begin{equation}
p=\bigg(\bigg(1-\frac{\rho_j}
{\max\{\|\mathsf{y}_j\|,\rho_j\}}\bigg)
\mathsf{y}_j\bigg)_{1\leq j\leq m}.
\end{equation}
\end{example}

\begin{example}
\label{ex:83}
In contrast to the hard clipping operations of Example~\ref{ex:hc},
soft clipping operators are not projection operators in general,
but many turn out to be proximity operators \cite{Ibap21}
(see Figure~\ref{fig:sc}). For instance, consider the setting of
Example~\ref{ex:81} with 
\begin{equation}
(\forall j\in\{1,\ldots,m\})\quad
\phi_j\colon\eta\mapsto\begin{cases}
-|\eta|-\ln(1-|\eta|)-\dfrac{\eta^2}{2},&\text{if}\;\;|\eta|<1;\\
\pinf, &\text{if}\;\;|\eta|\geq 1.
\end{cases}
\end{equation}
Then we obtain the soft clipping operator
\begin{equation}
F\colon (\mathsf{y}_j)_{1\leq j\leq m}\mapsto 
\bigg(\dfrac{\mathsf{y}_j}{1+\|\mathsf{y}_j\|}
\bigg)_{1\leq j\leq m}
\end{equation}
used in \cite{Marm20}. Soft clipping operators model sensors in
signal processing \cite{Avil17,Marm20,Tarr19} and activation 
functions in neural networks \cite{Svva20}.
\end{example}

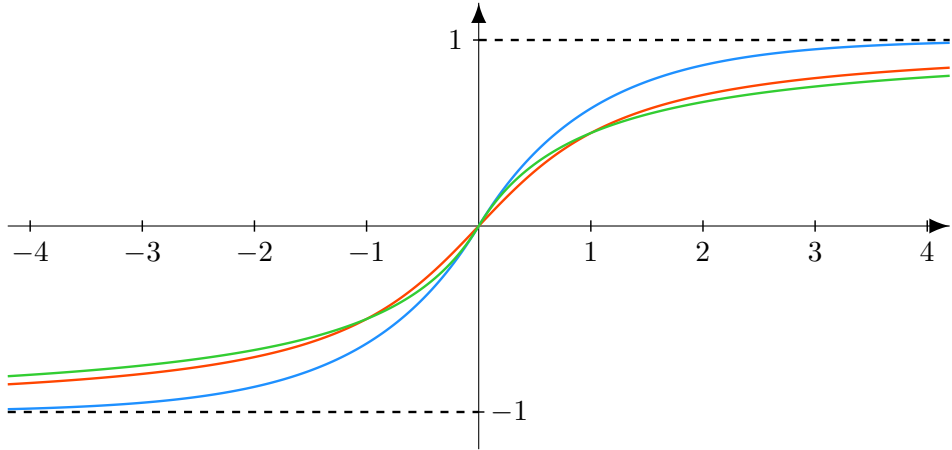
\begin{figure}[H]
\centering
\begin{tabular}{@{}c@{}c@{}}
\makeatletter
\tikzset{
nomorepostaction/.code=\makeatletter\let\tikz@postactions\pgfutil@empty, 
my axis/.style={
 postaction={
     decoration={
         markings,
         mark=at position 1 with {
             \arrow[ultra thick]{latex}
         }
     },
     decorate,
     nomorepostaction
 },
 thin, -, 
 every path/.append style=my axis 
    }
}
\makeatother
\begin{tikzpicture}
\begin{axis}[
  width=13.975cm,
  height=7.5cm,
  ymin=-1.2,ymax=1.2,
  axis lines=middle,
  axis line style={my axis},
  clip=false,
  ytick={1},
  yticklabels={$1$},
  ticklabel style={font=\normalsize},
  tick style={color=black, line width=0.05em},
  extra y ticks={-1},
  extra y tick style={y tick label style={right,
  xshift=0.25em},font=\small},
  extra y tick labels={$-1$},
]
\addplot [line width=0.9pt,mark=none,pblue,
samples=250,domain=-4.2:4.2] {sign(x)*(1-exp(-abs(x)))};
\addplot [line width=0.9pt,mark=none,pred,
samples=250,domain=-4.2:4.2] {2*rad(atan(x))/pi};
\addplot [line width=0.9pt,mark=none,pgreen,
samples=250,domain=-4.2:4.2] {x/(1+abs(x))};
\addplot [line width=0.9pt,mark=none,black,dashed,
samples=250,domain=-0.0:4.2] {1};
\addplot [line width=0.9pt,mark=none,black,dashed,
samples=250,domain=-4.2:0.0] {-1};
\end{axis}
\end{tikzpicture}\\
\end{tabular} 
\caption{Proximal soft clipping operators on $\RR$ with
saturation at $\pm 1$: 
$\eta\mapsto\sign(\eta)(1-\exp(-|\eta|))$
\cite[Section~10.6.3]{Tarr19} (blue),
$\eta\mapsto 2\arctan(\eta)/\pi$ \cite{Svva20} (red), 
and $\eta\mapsto\eta/(1+|\eta|)$ \cite{Marm20} (green).}
\label{fig:sc}
\end{figure}

\subsection{General firmly nonexpansive operators}
\label{sec:33}

Not all firmly nonexpansive operators are proximity operators
\cite{Bord18}. 

\begin{example}
\label{ex:d1}
Let $(R_j)_{1\leq j\leq m}$ be nonexpansive operators on $\GG$. 
Then the operator
\begin{equation}
\label{e:d1}
F=\dfrac{\Id+R_1\circ\cdots\circ R_m}{2}
\end{equation}
is firmly nonexpansive \cite[Proposition~4.4]{Livre1} but it
is not a proximity operator \cite[Example~3.5]{Bord18}.
A concrete instance of \eqref{e:d1} is found in audio signal
processing. Consider a distortion $p\in\GG$ of a linearly degraded
audio signal $L\overline{x}\in\GG$ modeled by 
\begin{equation}
F(L\overline{x})=p,
\end{equation}
where $L$ produces effects such as echo or reverberation
\cite[Chapter~11]{Tarr19}, and $F$ comprises 
several simpler operations $(R_j)_{1\leq j\leq m}$ which are
actually firmly nonexpansive (see, e.g., Example~\ref{ex:k},
\cite{Ibap21}, and \cite[Section~10.6.2]{Tarr19}). 
These simpler distortion operators are then used in series and
blended with a proportion of the input signal
\cite[Section~10.9]{Tarr19}, so that the overall process is
described by \eqref{e:d1} (see Figure~\ref{fig:dist}). 
More generally $F$ remains firmly nonexpansive when
$R_1\circ\cdots\circ R_m$ is replaced by any nonexpansive operator.
\end{example}

\begin{figure}[H]
\centering
\begin{circuitikz} 
\draw
(3.5,0) to[short, -, i=$1/2$] (4.75,0);
\draw[line width=0.28mm]
(0,1) to[] (0.6,1) node[above] {Input}
  to[] (1.25,1)
  to[] (1.25,2)
  to[short, -, i=$1/2$] (4.75,2)
  to[] (4.75,2)
  to[] (4.75,1) node[draw,circle,fill=white,minimum
  size=8pt, inner sep=1pt](p){$+$}
  to[] (5,1)
  to[] (5.8,1) node[above] {Output}
  to[] (6.5,1)
  to[] (4.75,1)
  to[] (4.75,0)
  to[] (3.1,0) node[draw,fill=white,minimum
  size=8pt, inner sep=4pt](r2){$R_{1}$}
  to[] (1.95,0) node[draw,fill=white,minimum
  size=8pt, inner sep=4pt](r1){$R_{2}$}
  to[] (1.25,0)
  to[] (1.25,1)
;
\draw[-latex,line width=0.25mm] (1.25,0) -- (r1.west);
\draw[-latex,line width=0.25mm] (r1.east) -- (r2.west);
\draw[-latex,line width=0.25mm] (4.75,0) -- (p.south);
\draw[-latex,line width=0.25mm] (4.75,2) -- (p.north);
\end{circuitikz}
\caption{The distortion operator $F$ in Example~\ref{ex:d1} for
$m=2$.}
\label{fig:dist}
\end{figure}
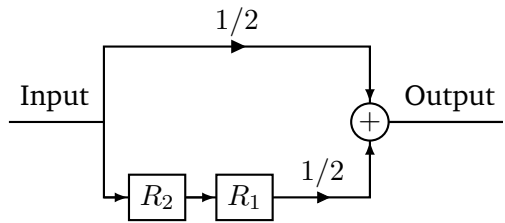

\subsection{Proxification}
\label{sec:34}

In some instances, a prescription $q\in\GG$ may be given by an
equation of the form $Qy=q$, where $Q\colon\GG\to\GG$ is
not firmly nonexpansive. In this section, we provide
constructive examples of \emph{proxification}, by which we mean
the replacement of the equality $Qy=q$ with an equivalent equality
$Fy=p$, where $p\in\GG$ and $F\colon\GG\to\GG$ is firmly
nonexpansive.

\begin{definition}
\label{d:proxif}
Let $Q\colon\GG\to\GG$ and let $q\in\ran Q$. Then $(Q,q)$ is
\emph{proxifiable} if there exists a firmly nonexpansive operator
$F\colon\GG\to\GG$ and $p\in\ran F$ such that 
$(\forall y\in\GG)$ $Qy=q$ $\Leftrightarrow$ $Fy=p$. In this case
$(F,p)$ is a \emph{proxification} of $(Q,q)$.
\end{definition}

We begin with a necessary condition describing when this technique
is possible.

\begin{proposition}
Let $Q\colon\GG\to\GG$ and $q\in\ran Q$ be such that $(Q,q)$ is 
proxifiable. Then
\begin{equation}
\label{e:n1}
Q^{-1}\big(\{q\}\big)=\menge{y\in\GG}{Qy=q}\;\;\text{is closed and
convex}.
\end{equation}
\end{proposition}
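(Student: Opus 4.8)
The plan is to first reduce the statement to a question about firmly nonexpansive operators alone. Since $(Q,q)$ is proxifiable, Definition~\ref{d:proxif} furnishes a firmly nonexpansive operator $F\colon\GG\to\GG$ and a point $p\in\ran F$ such that, for every $y\in\GG$, $Qy=q$ if and only if $Fy=p$. Hence
\[
Q^{-1}\big(\{q\}\big)=\menge{y\in\GG}{Qy=q}=\menge{y\in\GG}{Fy=p}=F^{-1}\big(\{p\}\big),
\]
and it suffices to prove that the solution set $S=F^{-1}(\{p\})$ of the proximal equation $Fy=p$ is closed and convex. (It is nonempty because $p\in\ran F$, though this is not needed.)

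Closedness is immediate: being firmly nonexpansive, $F$ is nonexpansive and therefore continuous, so $S$ is the preimage of the closed singleton $\{p\}$ under a continuous map.

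The substance of the proof is convexity, which I would obtain by recasting $S$ as a fixed point set. The operator $\Id-F$ is firmly nonexpansive, as already invoked in Example~\ref{ex:1} via \cite[Corollary~4.18]{Livre1}, and translating by the constant vector $p$ leaves the operator differences $(\Id-F)x-(\Id-F)y$ unchanged and hence preserves firm nonexpansiveness; thus $G=\Id-F+p$ is firmly nonexpansive, in particular nonexpansive. A direct computation gives $Gy=y\Leftrightarrow Fy=p$, so that $S=\Fix G$. Since the fixed point set of a nonexpansive self-map of a Hilbert space is closed and convex \cite[Corollary~4.24]{Livre1}, the conclusion follows.

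The only delicate point, and the step I expect to be the main obstacle if one forgoes the fixed-point-set theorem, is convexity. Should a self-contained argument be preferred, I would use instead that $R=2F-\Id$ is nonexpansive: for $y_0,y_1\in S$ and $y_\lambda=(1-\lambda)y_0+\lambda y_1$, the estimates $\|Ry_\lambda-Ry_0\|\leq\|y_\lambda-y_0\|$ and $\|Ry_\lambda-Ry_1\|\leq\|y_\lambda-y_1\|$, together with $Ry_0=2p-y_0$ and $Ry_1=2p-y_1$, force equality throughout the triangle inequality for the pair $(2p-y_0,2p-y_1)$; this pins down $Ry_\lambda=2p-y_\lambda$, that is $Fy_\lambda=p$, whence $y_\lambda\in S$.
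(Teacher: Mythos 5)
Your proof is correct and follows essentially the same route as the paper: both reduce to the level set $F^{-1}(\{p\})$ of the firmly nonexpansive operator, form the operator $\Id-F+p$ (the paper's $T$, your $G$), observe it is firmly nonexpansive, and conclude via the closedness and convexity of fixed point sets of nonexpansive operators \cite[Corollary~4.24]{Livre1}. Your additional self-contained convexity argument via $R=2F-\Id$ is sound but is just the standard proof of that corollary unfolded, so it adds nothing beyond the paper's citation.
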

\begin{proof}
The proxification assumption means that there exists a firmly 
nonexpansive operator $F\colon\GG\to\GG$ and $p\in\ran F$ such that
$Q^{-1}(\{q\})=F^{-1}(\{p\})$. Now set $T=\Id-F+p$. Then it follows
from \cite[Proposition~4.4]{Livre1} that $T$ is firmly 
nonexpansive, and therefore from \cite[Corollary~4.24]{Livre1} that
$Q^{-1}(\{q\})=F^{-1}(\{p\})=\Fix T$ is closed and convex. 
\end{proof}

Interestingly, condition \eqref{e:n1} is also assumed in various
nonlinear recovery problems \cite{Renc19,Rzep18,Thao94}.
However, the solution techniques of these papers require
the ability to project onto $Q^{-1}(\{q\})$ -- a
capability which rarely occurs when $\dim\GG>1$. The numerical
approach proposed in Section~\ref{sec:4} will circumvent this
requirement and lead to provenly-convergent algorithms which
instead rely on evaluating the associated firmly nonexpansive
operator $F\colon\GG\to\GG$.

\begin{example}[\hspace{1sp}{\cite[Proposition~2.14]{Ibap21}}]
\label{ex:ht}
For every $j\in\{1,\ldots,m\}$, let $\mathsf{G}_j$ be a real
Hilbert space, let $\mathsf{D}_j$ be a nonempty closed convex
subset of $\mathsf{G}_j$, let $\gamma_j\in\RPP$, and set
\begin{equation}
\label{e:ht6}
\mathsf{Q}_j\colon\mathsf{G}_j\to\mathsf{G}_j\colon
\mathsf{y}_j\mapsto \begin{cases}
\mathsf{y}_j,&\text{if}\;\;d_{\mathsf{D}_j}
(\mathsf{y}_j)>\gamma_j;\\
\proj_{\mathsf{D}_j}\mathsf{y}_j,&\text{if}\;\;
d_{\mathsf{D}_j}(\mathsf{y}_j)\leq\gamma_j
\end{cases}
\end{equation}
and
\begin{equation}
\label{e:g3}
\mathsf{S}_j\colon\mathsf{G}_j\to\mathsf{G}_j\colon
\mathsf{y}_j\mapsto
\begin{cases}
\mathsf{y}_j+\dfrac{\gamma_j}{d_{\mathsf{D}_j}(\mathsf{y}_j)}
(\proj_{\mathsf{D}_j}\mathsf{y}_j-\mathsf{y}_j),&\text{if}\;\:
\mathsf{y}_j\not\in\mathsf{D}_j;\\
\mathsf{y}_j,&\text{if}\;\:\mathsf{y}_j\in\mathsf{D}_j.
\end{cases}
\end{equation}
Suppose that $\GG=\bigoplus_{1\leq j\leq m}\mathsf{G}_j$, set
$Q\colon\GG\to\GG\colon(\mathsf{y}_j)_{1\leq j\leq m}\mapsto
(\mathsf{Q}_j\mathsf{y}_j)_{1\leq j\leq m}$, and let $q\in\ran Q$.
Even though $Q$ is discontinuous, $(Q,q)$ is proxifiable. Indeed,
set $S\colon\GG\to\GG\colon(\mathsf{y}_j)_{1\leq j\leq m}\mapsto
(\mathsf{S}_j\mathsf{y}_j)_{1\leq j\leq m}$, 
$F\colon\GG\to\GG\colon(\mathsf{y}_j)_{1\leq j\leq m}\mapsto
(\mathsf{S}_j(\mathsf{Q}_j\mathsf{y}_j))_{1\leq j\leq m}$, 
and $p=Sq$.
Then $(F,p)$ is a proxification of $(Q,q)$. In particular if, for
every $j\in\{1,\ldots,m\}$, $\mathsf{D}_j=\{0\}$, then $Q$ is the
block thresholding estimation operator of
\cite[Section~2.3]{Hall98}.
\end{example}

\begin{example}
\label{ex:ht1}
Consider Example~\ref{ex:ht} with, for every $j\in\{1,\ldots,m\}$,
$\mathsf{G}_j=\RR$, $\mathsf{D}_j=\{0\}$, and 
$\gamma_j=\gamma\in\RPP$. Then each operator $\mathsf{Q}_j$ in 
\eqref{e:ht6} reduces to the hard thresholder 
\begin{equation}
\label{e:ht}
\hard{\gamma}\colon\eta\mapsto
\begin{cases}
\eta,&\text{if}\;\;|\eta|>\gamma;\\
0,&\text{if}\;\;|\eta|\leq\gamma,
\end{cases}
\end{equation}
$\mathsf{S}_j\colon\eta\mapsto\eta-\gamma\sign(\eta)$, and
\begin{equation}
\label{e:5}
\mathsf{S}_j\circ\hard{\gamma}=\soft{\gamma}\colon\eta\mapsto
\sign(\eta)\max\{|\eta|-\gamma,0\}
\end{equation}
is the soft thresholder on $[-\gamma,\gamma]$. Furthermore, it
follows from Example~\ref{ex:ht} that $(F,p)$ is a proxification of
$(Q,q)$. The resulting transformation $Q$ is used for signal
compression in \cite{Dilw03,Teml98}, and as a sensing model in
\cite{Boch13}.
\end{example}

Next, we combine Example~\ref{ex:ht1} with Lemma~\ref{l:m1} to
address low rank matrix approximation. Note the properties of
$\phi$ in Lemma~\ref{l:m1} imply that $\prox_{\phi}0=0$. Therefore,
operators of the form \eqref{e:Fm} cannot increase the rank of a
matrix.

\begin{example}
\label{ex:svd}
Let $\GG$ be the real Hilbert space of $N\times M$ matrices
under the Frobenius norm, set $s=\min\{N,M\}$, and let us denote
the singular value decomposition of $y\in\GG$ by 
$y=U_y\:\diag(\sigma_1(y),\ldots,\sigma_s(y))V_y^\top$. Let
$\rho\in\RPP$, let $\hard{\rho}$ be given by \eqref{e:ht}, set
$\mathsf{S}\colon\RR\to\RR\colon\eta\mapsto\eta-\rho\sign(\eta)$,
and set
\begin{equation}
\label{e:tsvd}
\begin{cases}
Q\colon\GG\to\GG\colon y\mapsto U_y\:
\diag\Big(\hard{\rho}\big(\sigma_1(y)\big),\ldots,\hard{\rho}
\big(\sigma_s(y)\big)\Big)V_y^\top\\[2mm]
S\colon\GG\to\GG\colon y\mapsto U_y\:\diag\Big(
\mathsf{S}\big(\sigma_1(y)\big),\ldots,
\mathsf{S}\big(\sigma_s(y)\big)\Big)V_y^\top.
\end{cases}
\end{equation}
Let $q\in\ran Q$, and set $F=S\circ Q$ and $p=Sq$. Since
$\soft{\rho}=\prox_{\rho|\,\cdot\,|}$ and $\rho|\cdot|$ is
even, it follows from Example~\ref{ex:ht1} and Lemma~\ref{l:m1}
that $(F,p)$ is a proxification of $(Q,q)$. The operator $Q$ 
is used in image compression to produce low rank
approximations \cite{Andr76,Knoc99,Rana07,Yang95}, and the
associated firmly nonexpansive operator $F$ soft-thresholds
singular values at level $\rho$.
\end{example}

\begin{remark}
\label{r:svd}
In the setting of Example~\ref{ex:svd}, consider the
compression technique performed by the nonconvex projection
operator $R\colon\GG\to\GG$ \cite{Cadz88} which truncates singular
values at a given rank $r\in\{1,\ldots,s-1\}$, i.e.,
$R\colon y\mapsto U_y\:\diag\big(\sigma_1(y),\ldots,
\sigma_r(y),0,\ldots,0\big)V_y^\top$.
Let $y\in\GG$ and set $q=Ry$. Then, for every
$\rho\in\left]\sigma_{r+1}(y),\sigma_{r}(y)\right[$, $Qy=q$.
Therefore, knowledge of the low rank approximation $q$ to $y$ 
can be exploited in our framework by proxifying $(Q,q)$ using
Example~\ref{ex:svd}. Note that $\rho$ can be estimated from $q$
since one has access to $\sigma_{r}(q)=\sigma_{r}(y)$.
\end{remark}

Our last example illustrates how proxification can be used to
handle a prescription arising from an extension of the notion of 
a proximity operator for nonconvex functions.

\begin{example}
\label{ex:wc}
Let $\mu\in\RPP$, let $\gamma\in\left]0,1/\mu\right[$, set
$\beta=1-\gamma\mu$, and let $g\colon\GG\to\RX$ be proper, lower
semicontinuous, and \emph{$\mu$-weakly convex} in the sense that
$g+{\mu}\|\cdot\|^2/{2}$ is convex. For every $y\in\GG$, 
$g+\|y-\cdot\|^2/(2\gamma)$ is a strongly convex function in 
$\Gamma_0(\GG)$ and, by \cite[Corollary~11.17]{Livre1}, it 
therefore admits a unique minimizer $Q_{\gamma g}y$, which 
defines the operator $Q_{\gamma g}\colon\GG\to\GG$. Now let 
$q\in\ran Q_{\gamma g}$ and set $A=\partial(g+\mu\|\cdot\|^2/2)$,
$B=A-\mu\Id$, $F=\beta Q_{\gamma g}$, and $p=\beta q$. Then $A$ is
maximally monotone but in general, since $g$ is not convex, 
$Q_{\gamma g}$ is not firmly nonexpansive. However,
\begin{align}
\big(\forall (y,p)\in\GG\times\GG\big)\quad
Q_{\gamma g}y=p
&\Leftrightarrow p\in\zer\Big(\partial\Big(\gamma
g+\dfrac{\gamma\mu}{2}
\|\cdot\|^2-\dfrac{\gamma\mu}{2}\|\cdot\|^2
+\dfrac{1}{2}\|y-\cdot\|^2\Big)\Big)
\nonumber\\
&\Leftrightarrow p\in\zer(\gamma A+\beta\Id-y)
=\zer(\Id+\gamma B-y)\nonumber\\
&\Leftrightarrow J_{\gamma B}y=p,
\end{align}
so Lemma~\ref{l:34} implies that $Q_{\gamma g}=J_{\gamma B}$ is
$\beta$-cocoercive. Thus, $(F,p)$ is a proxification of
$(Q_{\gamma g},q)$. Operators of the form $Q_{\gamma g}$ are used
for shrinkage in \cite{Bayr17,Mali14,Sele17} in the same spirit 
as in Example~\ref{ex:82}. For instance, for $\GG=\RR$ and
$\rho\in\RPP$, the penalty $g=\ln(\rho+|\,\cdot\,|)$ of
\cite{Mali14,Sele17} is $\rho^{-2}$-weakly convex and yields
\begin{equation}
\label{e:lt}
Q_{\gamma g}\colon y\mapsto
\begin{cases}
\dfrac{1}{2}\big(y-\rho +\sqrt{|y+\rho|^2-4\gamma}\,\big),
&\text{if}\;\;y>\dfrac{\gamma}{\rho};\\[3mm]
0,&\text{if}\;\;|y|\leq\dfrac{\gamma}{\rho};\\[2mm]
\dfrac{1}{2}\big(y+\rho-\sqrt{|y-\rho|^2-4\gamma}\,\big),
&\text{if}\;\;y<-\dfrac{\gamma}{\rho}.
\end{cases}
\end{equation}
\end{example}

\subsection{Operators arising from monotone equilibria}
\label{sec:35}

The property that the object of interest is a zero of the sum of
two monotone operators can be modeled in our framework as follows.

\begin{example}
\label{ex:fb}
Let $A\colon\GG\to2^\GG$ be maximally monotone, let
$\beta\in\RPP$, and let $B\colon\GG\to\GG$ be $\beta$-cocoercive.
Let $\gamma\in\left]0,2\beta\right[$ and set
\begin{equation}
\label{e:fb}
F=\bigg(1-\frac{\gamma}{4\beta}\bigg)
\big(\Id-J_{\gamma A}\circ(\Id-\gamma B)\big)
\quad\text{and}\quad p=0.
\end{equation}
Then $F$ is firmly nonexpansive and, for every $y\in\GG$,
$Fy=p\Leftrightarrow y\in\zer(A+B)$. Indeed, set $R=J_{\gamma
A}\circ(\Id-\gamma B)$. By \cite[Proposition~26.1(iv)]{Livre1}, $R$
is $(2-\gamma/2\beta)^{-1}$-averaged and $\zer F=\Fix R=\zer(A+B)$.
It follows from \cite[Proposition~4.39]{Livre1} that $\Id-R$ is
$(1-\gamma/(4\beta))$-cocoercive, which makes $F$ firmly
nonexpansive.
\end{example}

\begin{example}
\label{ex:ms}
Let $f\in\Gamma_0(\GG)$, let $\beta\in\RPP$, and let
$g\colon\GG\to\RR$ be a convex and differentiable function
such that $\nabla g$ is $\beta^{-1}$-Lipschitzian. 
Consider the task of enforcing the property
\begin{equation}
\label{e:bl}
y\in\Argmin (f+g).
\end{equation}
Set $A=\partial f$ and $B=\nabla g$. Then $B$ is $\beta$-cocoercive
\cite[Corollary~18.17]{Livre1}, and \eqref{e:bl} holds if and
only if $y\in\zer(A+B)$. Therefore, Example~\ref{ex:fb} yields a
proximal point representation $(F,p)$ of \eqref{e:bl}.
\end{example}

\section{Analysis and numerical solution of Problem~\ref{prob:2}}
\label{sec:4}

We first show that Problem~\ref{prob:2} is an appropriate
relaxation of Problem~\ref{prob:1}.

\begin{proposition}
\label{p:24}
Suppose that the set of solutions to Problem~\ref{prob:1} is
nonempty. Then it coincides with that of solutions to 
Problem~\ref{prob:2}.
\end{proposition}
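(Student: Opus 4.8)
The plan is to establish the equality of the two solution sets by proving a double inclusion. The inclusion ``solutions of Problem~\ref{prob:1} $\subseteq$ solutions of Problem~\ref{prob:2}'' is immediate and uses no hypothesis: if $x\in C$ satisfies $F_i(L_ix)=p_i$ for every $i\in I$, then each residual $F_i(L_ix)-p_i$ is zero, so every summand $\scal{L_i(y-x)}{F_i(L_ix)-p_i}$ in \eqref{e:prob2} vanishes and the variational inequality holds trivially (indeed with equality, for all $y\in C$). I would dispatch this direction first.

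The substance is the reverse inclusion, and this is precisely where the standing assumption---that Problem~\ref{prob:1} possesses at least one solution $\overline{x}$---is used. The idea is to employ such a solution $\overline{x}\in C$ as the test point $y$ in the variational inequality. Fix an arbitrary solution $x\in C$ to Problem~\ref{prob:2}. Since $\overline{x}\in C$, setting $y=\overline{x}$ in \eqref{e:prob2} and substituting the feasibility identities $p_i=F_i(L_i\overline{x})$ gives
\begin{equation}
\sum_{i\in I}\omega_i\scal{L_i(\overline{x}-x)}{F_i(L_ix)-F_i(L_i\overline{x})}\geq 0.
\end{equation}
Abbreviating $u_i=L_ix$ and $v_i=L_i\overline{x}$, the left-hand side equals $-\sum_{i\in I}\omega_i\scal{u_i-v_i}{F_iu_i-F_iv_i}$, so this inequality is the same as $\sum_{i\in I}\omega_i\scal{u_i-v_i}{F_iu_i-F_iv_i}\leq 0$.

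The final step combines this with firm nonexpansiveness. By \eqref{e:f}, each inner product obeys $\scal{u_i-v_i}{F_iu_i-F_iv_i}\geq\|F_iu_i-F_iv_i\|^2\geq 0$, and since every weight $\omega_i$ is strictly positive, a sum of such nonnegative terms can be $\leq 0$ only if every term is zero. Hence $\|F_iu_i-F_iv_i\|^2=0$ for each $i\in I$, i.e.\ $F_i(L_ix)=F_i(L_i\overline{x})=p_i$, which shows that $x$ solves Problem~\ref{prob:1} and completes the argument. I do not anticipate a serious obstacle here; the only point requiring care is observing that, once the feasible test point $y=\overline{x}$ is inserted, the sign of the variational inequality is \emph{opposite} to the firm-nonexpansiveness estimate, so the two can be squeezed together to force each residual $F_i(L_ix)-p_i$ to vanish---and that the strict positivity of the $\omega_i$ is exactly what legitimizes passing from the vanishing of the weighted sum to the vanishing of each individual term.
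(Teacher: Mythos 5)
Your proposal is correct and follows essentially the same route as the paper: the easy inclusion is dispatched by noting the residuals vanish, and the reverse inclusion is obtained by testing the variational inequality at a feasible point $\overline{x}$ of Problem~\ref{prob:1} and squeezing against the firm-nonexpansiveness inequality \eqref{e:f} to force each term $\|F_i(L_ix)-F_i(L_i\overline{x})\|^2$ to vanish. No differences worth noting.
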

\begin{proof}
Let $\overline{x}$ be a solution to Problem~\ref{prob:1}. Then it
is clear that $\overline{x}$ solves Problem~\ref{prob:2}. Now let
$x$ be a solution to Problem~\ref{prob:2}. Then $x\in C$ and 
\begin{equation}
\label{e:241}
(\forall y\in C)\quad\sum_{i\in I}\omega_i
\scal{L_i(x-y)}{F_i(L_ix)-p_i}\leq 0.
\end{equation}
Therefore, since $\overline{x}\in C$ and, for every $i\in I$,
$F_i(L_i\overline{x})=p_i$, we obtain
\begin{equation}
\label{e:242}
\sum_{i\in I}\omega_i
\scal{L_ix-L_i\overline{x}}{F_i(L_ix)-F_i(L_i\overline{x})}\leq 0
\end{equation}
and, by firm nonexpansiveness of the operators $(F_i)_{i\in I}$, 
\begin{equation}
\label{e:243}
\sum_{i\in I}\omega_i\|F_i(L_ix)-F_i(L_i\overline{x})\|^2
\leq\sum_{i\in I}\omega_i
\scal{L_ix-L_i\overline{x}}{F_i(L_ix)-F_i(L_i\overline{x})}\leq 0.
\end{equation}
We conclude that $(\forall i\in I)$ 
$F_i(L_ix)=F_i(L_i\overline{x})=p_i$.
\end{proof}

\begin{remark}
\label{r:22}
Consider the setting of Problem~\ref{prob:2} and set
$\GGG=\bigoplus_{i\in I}\GG_i$, 
$\boldsymbol{L}\colon\HH\to\colon\GGG\colon 
x\mapsto(L_ix)_{i\in I}$, 
$\boldsymbol{F}\colon\GGG\to\GGG\colon (y_i)_{i\in I}\mapsto 
(F_iy_i)_{i\in I}$, and $\boldsymbol{p}=(p_i)_{i\in I}$.
Note that
\begin{equation}
\text{Problem~\ref{prob:1} admits a solution if and only if}\:\:
\boldsymbol{p}\in\boldsymbol{F}\big(\boldsymbol{L}(C)\big).
\end{equation}
Thus, the quantity 
$d_{\boldsymbol{F}(\boldsymbol{L}(C))}(\boldsymbol{p})$ provides a
measure of inconsistency of Problem~\ref{prob:1}. We can actually
use a solution to Problem~\ref{prob:2} to estimate it. Indeed,
suppose that $\overline{x}_1$ and $\overline{x}_2$ are solutions to
\eqref{e:prob2}. Then \eqref{e:f} yields
\begin{align}
\sum_{i\in I}\omega_i
\|F_i(L_i\overline{x}_1)-F_i(L_i\overline{x}_2)\|^2
&\leq\sum_{i\in I}\omega_i
\scal{L_i\overline{x}_1-L_i\overline{x}_2}
{F_i(L_i\overline{x}_1)-F_i(L_i\overline{x}_2)}
\nonumber\\
&=\sum_{i\in I}\omega_i\scal{L_i(\overline{x}_1-\overline{x}_2)}
{F_i(L_i\overline{x}_1)-p_i}
\nonumber\\
&\quad+\sum_{i\in I}\omega_i
\scal{L_i(\overline{x}_2-\overline{x}_1)}
{F_i(L_i\overline{x}_2)-p_i}
\nonumber\\
&\leq 0.
\end{align}
Hence, for every $i\in I$, there exists a unique
$\overline{p}_i\in\GG_i$ such that every solution
$\overline{x}$ to Problem~\ref{prob:2} satisfies
\begin{equation}
\label{e:p}
F_i(L_i\overline{x})=\overline{p}_i.
\end{equation} 
In turn, if $\overline{x}$ is any solution to Problem~\ref{prob:2},
then 
\begin{equation}
\label{e:fgap}
d_{\boldsymbol{F}(\boldsymbol{L}(C))}(\boldsymbol{p})
=\inf_{x\in C}\|\boldsymbol{p}-\boldsymbol{F}(\boldsymbol{L}x)\|
\leq\|\boldsymbol{p}-\boldsymbol{F}(\boldsymbol{L}\overline{x})\|
=\|\boldsymbol{p}-\overline{\boldsymbol{p}}\|= 
\sqrt{\sum_{i\in I}\|p_i-\overline{p}_i\|^2}.
\end{equation}
\end{remark}

Next, we turn to the existence of solutions.

\begin{proposition}
\label{p:8}
Problem~\ref{prob:2} admits a solution in each of the 
following instances.
\begin{enumerate}
\item
\label{p:8i}
$\sum_{i\in I}\omega_iL_i^*p_i\in\ran(N_C+\sum_{i\in I}
\omega_iL_i^*\circ F_i\circ L_i)$.
\item
\label{p:8ii}
$C$ is bounded.
\item
\label{p:8iii}
$\ran N_C+\sum_{i\in I}\omega_i L_i^*(\ran F_i)=\HH$.
\item
\label{p:8iv}
For some $i\in I$, $L_i^*$ is surjective and one of the following
holds:
\begin{enumerate}
\item
\label{p:8iva}
$L_i^*(\ran F_i)=\HH$.
\item
\label{p:8ivb}
$F_i$ is surjective.
\item
\label{p:8ivc}
$\|F_i(y)\|\to\pinf$ as
$\|y\|\to\pinf$.
\item
\label{p:8ivd}
$\ran(\Id-F_i)$ is bounded.
\item
\label{p:8ive}
There exists a continuous convex function $g_i\colon\GG_i\to\RR$
such that $F_i=\prox_{g_i}$.
\end{enumerate}
\end{enumerate}
\end{proposition}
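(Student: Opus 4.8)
The plan is to translate Problem~\ref{prob:2} into a single range condition for a maximally monotone operator and then check that each hypothesis forces $0$ into that range. First I would set $B\colon\HH\to\HH\colon x\mapsto\sum_{i\in I}\omega_iL_i^*(F_i(L_ix)-p_i)$, so that, after moving each $L_i$ across the inner product, \eqref{e:prob2} reads: find $x\in C$ with $\scal{y-x}{Bx}\geq 0$ for all $y\in C$, i.e.\ the variational inequality associated with $B$. Since requiring $\scal{y-x}{-Bx}\leq 0$ for all $y\in C$ is exactly $-Bx\in N_Cx$, this is equivalent to the inclusion $0\in N_Cx+Bx$. Writing $M=N_C+\sum_{i\in I}\omega_iL_i^*\circ F_i\circ L_i$ and $c=\sum_{i\in I}\omega_iL_i^*p_i$, so that $N_C+B=M-c$, I conclude that Problem~\ref{prob:2} has a solution if and only if $c\in\ran M$, which is precisely~\ref{p:8i}. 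This disposes of~\ref{p:8i} and reduces every remaining case to showing $c\in\ran M$.

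Next I would record the structural facts used throughout. Each $L_i^*\circ F_i\circ L_i$ is monotone and Lipschitz with full domain, hence $\sum_{i\in I}\omega_iL_i^*\circ F_i\circ L_i$ is maximally monotone; adding $N_C$ preserves maximal monotonicity because the second summand has full domain, so $M$ is maximally monotone. Moreover $N_C=\partial\iota_C$ is $3^*$ monotone as the subdifferential of a function in $\Gamma_0(\HH)$, and each $\omega_iF_i$, being cocoercive, is $3^*$ monotone (a one-line parabola estimate bounds the supremum in \eqref{e:3*mon}). Lemma~\ref{l:2}, applied with $A=N_C$ and $B_i=\omega_iF_i$, then yields $\inte\ran M=\inte\big(\ran N_C+\sum_{i\in I}\omega_iL_i^*(\ran F_i)\big)$ together with the closure counterpart.

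With these in hand the remaining families are short. For~\ref{p:8iii} the Minkowski sum on the right equals $\HH$, so $\inte\ran M=\HH$ and hence $\ran M=\HH$, giving $c\in\ran M$. For~\ref{p:8ii}, $\dom M=C$ is bounded, and a maximally monotone operator with bounded domain is surjective, so again $c\in\ran M$. For~\ref{p:8iv} I would reduce every subcase to assertion~\ref{p:8iva}, namely $L_i^*(\ran F_i)=\HH$: once this holds the summand $\omega_iL_i^*(\ran F_i)$ already equals $\HH$, and since every other summand (including $\ran N_C\ni 0$) is nonempty, the whole Minkowski sum is $\HH$, i.e.~\ref{p:8iii}. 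Subcase~\ref{p:8ivb} gives $\ran F_i=\GG_i$, whence $L_i^*(\ran F_i)=\ran L_i^*=\HH$; subcase~\ref{p:8ive} gives $\ran F_i=\ran\prox_{g_i}=\dom\partial g_i=\GG_i$, because a finite continuous convex function is subdifferentiable everywhere, so it falls back on~\ref{p:8ivb}. For~\ref{p:8ivd} I would fix $z\in\GG_i$ and apply Browder's fixed point theorem to the nonexpansive map $y\mapsto z+(\Id-F_i)y$, which leaves invariant a closed ball whose radius is the bound on $\ran(\Id-F_i)$; any fixed point solves $F_iy=z$, so $F_i$ is surjective. For~\ref{p:8ivc} the coercivity $\|F_i(y)\|\to\pinf$ says exactly that $F_i^{-1}$ maps bounded sets to bounded sets, which for a maximally monotone operator forces surjectivity, again reducing to~\ref{p:8ivb}.

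The main obstacle is the surjectivity step in~\ref{p:8iv}, specifically subcase~\ref{p:8ivc}: converting the mild coercivity $\|F_i(y)\|\to\pinf$ into $\ran F_i=\GG_i$ relies on the criterion that a maximally monotone operator is onto precisely when its inverse is bounded on bounded sets, a fact I would need to quote carefully. Everything else is bookkeeping around two genuinely nontrivial ingredients: Lemma~\ref{l:2} (already available) and the verification that $N_C$ and the $\omega_iF_i$ are $3^*$ monotone so that it applies.
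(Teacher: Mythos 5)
Your proposal is correct and follows essentially the same route as the paper: reduce everything to the inclusion $\sum_{i\in I}\omega_iL_i^*p_i\in\ran M$ with $M=N_C+\sum_{i\in I}\omega_iL_i^*\circ F_i\circ L_i$ maximally monotone, use surjectivity of maximally monotone operators with bounded domain for (ii), the Br\'ezis--Haraux-type Lemma~\ref{l:2} with $3^*$ monotonicity for (iii), and the chain (e),(c) $\Rightarrow$ (b) $\Rightarrow$ (a) $\Rightarrow$ (iii) for (iv). The only divergence is subcase (iv)(d), where you invoke the Browder--G\"ohde--Kirk fixed point theorem to obtain surjectivity of $F_i$ directly, whereas the paper reduces (d) to (c) by the one-line estimate $\|F_iy\|\geq\|y\|-\rho$ with $\rho=\sup_{y}\|y-F_iy\|$; both are valid, the paper's being the more elementary.
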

\begin{proof}
Set $A=N_C$ and $(\forall i\in I)$ $B_i=\omega_i F_i$. Then the
operators $(B_i)_{i\in I}$ are cocoercive. Now define
\begin{equation}
\label{e:M}
M=A+\sum_{i\in I}L_i^*\circ B_i\circ L_i.
\end{equation}
It follows from \cite[Proposition~4.12]{Livre1} that 
$B=\sum_{i\in I}L_i^*\circ B_i\circ L_i$ is cocoercive and 
hence maximally monotone by 
\cite[Example~20.31]{Livre1}, with $\dom B=\HH$.
On the other hand, \cite[Example~20.26]{Livre1} asserts
that $A$ is maximally monotone. We therefore derive from
\cite[Corollary~25.5(i)]{Livre1} that 
\begin{equation}
\label{e:za77}
M\;\text{is maximally monotone}.
\end{equation}

\ref{p:8i}: Let $x\in\HH$. In view of \eqref{e:normalcone}, $x$ 
solves Problem~\ref{prob:2} if and only if
\begin{equation}
\label{e:za76}
-\sum_{i\in I}\omega_iL_i^*\big(F_i(L_ix)-p_i\big)\in N_Cx, 
\end{equation}
that is, $\sum_{i\in I}\omega_iL_i^*p_i\in Mx$.

\ref{p:8ii}: Since $\dom M=\dom A=C$ is bounded, it follows 
from \eqref{e:za77} and \cite[Corollary~21.25]{Livre1} that $M$ is
surjective, so \ref{p:8i} holds. 

\ref{p:8iii}: It follows from \cite[Example~25.14]{Livre1} that
$A$ is $3^*$ monotone and from \cite[Example~25.20(i)]{Livre1} 
that the operators $(B_i)_{i\in I}$ are likewise. Hence, in view 
of \eqref{e:za77} we invoke Lemma~\ref{l:2} to get
\begin{equation}
\inte\ran M=
\inte\ran\bigg(A+\sum_{i\in I}L_i^*\circ B_i\circ L_i\bigg)=
\inte\bigg(\ran A+\sum_{i\in I}L_i^*(\ran B_i)\bigg)=
\HH.
\end{equation}
So $M$ is surjective and \ref{p:8i} holds. 

\ref{p:8ivb}$\Rightarrow$\ref{p:8iva}$\Rightarrow$\ref{p:8iii}:
Clear.

\ref{p:8ivc}$\Rightarrow$\ref{p:8ivb}:
Since $F_i$ is maximally monotone by 
\cite[Example~20.30]{Livre1}, this follows from 
\cite[Corollary~21.24]{Livre1}.

\ref{p:8ivd}$\Rightarrow$\ref{p:8ivc}: Set
$\rho=\sup_{y\in\GG_i}\|y-F_iy\|$. Then 
$\|F_iy\|\geq\|y\|-\|y-F_iy\|\geq\|y\|-\rho\to\pinf$ as
$\|y\|\to\pinf$.

\ref{p:8ive}$\Rightarrow$\ref{p:8ivb}: 
We derive from \cite[Proposition~16.27]{Livre1} that
$\GG_i=\intdom g_i\subset\dom\partial g_i=\dom(\Id+\partial g_i)
=\ran(\Id+\partial g_i)^{-1}=\ran\prox_{g_i}$.
\end{proof}

\begin{example}
\label{ex:34}
A simple instance when Problem~\ref{prob:1} has no solution, while
the relaxed Problem~\ref{prob:2} does, is the following. Take 
disjoint nonempty closed convex subsets $C$ and $D$ of $\HH$
such that $C$ is bounded, and let $I={1}$, $\GG_1=\HH$,
$L_1=\Id$, $F_1=\Id-\proj_{D}$, and $p_1=0$. Then the solution 
set of Problem~\ref{prob:1} is $C\cap D=\emp$, while that 
of Problem~\ref{prob:2} is
$\Fix(\proj_C\circ\proj_D)\neq\emp$ \cite{Gubi67}.
\end{example}

We have described in Example~\ref{ex:1} an instance of the relaxed
Problem~\ref{prob:2} which is in fact a minimization problem. The
next proposition describes a general setting in which a
minimization problem underlies Problem~\ref{prob:2}. It involves
the Moreau envelope of \eqref{e:18}. 

\begin{proposition}
\label{p:9}
Consider the setting of Problem~\ref{prob:2} and suppose that, 
for every $i\in I$, there exists $g_i\in\Gamma_0(\GG_i)$ such that
$F_i=\prox_{g_i}$. Then the objective of Problem~\ref{prob:2} 
is to
\begin{equation}
\label{e:prob7}
\minimize{x\in C}{f(x)},\quad\text{where}\quad 
f\colon x\mapsto\frac{1}{2}\sum_{i\in I}\,\omega_i
\Big(\widetilde{g_i^*}(L_ix)-\scal{L_ix}{p_i}\Big).
\end{equation}
\end{proposition}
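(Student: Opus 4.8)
The plan is to recognize \eqref{e:prob7} as a smooth convex minimization problem whose first-order optimality condition, supplied by Lemma~\ref{l:1}, is exactly the variational inequality \eqref{e:prob2}. Concretely, I would show that $f$ is a real-valued convex differentiable function on $\HH$, compute its gradient, and verify that this gradient coincides (up to the harmless positive factor $1/2$) with the operator $x\mapsto\sum_{i\in I}\omega_iL_i^*(F_i(L_ix)-p_i)$ that drives \eqref{e:prob2}. Since Lemma~\ref{l:1} is a characterization of minimizers, this establishes the claimed equivalence of the two formulations without any appeal to existence or coercivity.

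The crux is the identification of $\nabla\widetilde{g_i^*}$ with $F_i$. For each $i\in I$, since $g_i\in\Gamma_0(\GG_i)$, its conjugate $g_i^*$ also lies in $\Gamma_0(\GG_i)$, so the Moreau envelope $\widetilde{g_i^*}$ is a real-valued convex differentiable function on $\GG_i$ with gradient $\nabla\widetilde{g_i^*}=\Id-\prox_{g_i^*}$. Invoking the Moreau decomposition $\prox_{g_i}+\prox_{g_i^*}=\Id$ together with the standing hypothesis $F_i=\prox_{g_i}$, I obtain
\[
\nabla\widetilde{g_i^*}=\Id-\prox_{g_i^*}=\Id-(\Id-F_i)=F_i.
\]
Now each map $x\mapsto\widetilde{g_i^*}(L_ix)$ is convex and differentiable, being the composition of the convex differentiable $\widetilde{g_i^*}$ with the bounded linear operator $L_i$, and subtracting the continuous linear form $x\mapsto\scal{L_ix}{p_i}$ preserves convexity and differentiability. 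As the index set is finite and the weights $\omega_i$ are strictly positive, $f\colon\HH\to\RR$ is convex and differentiable, and the chain rule together with the identity above gives
\[
\nabla f(x)=\frac{1}{2}\sum_{i\in I}\omega_i\big(L_i^*F_i(L_ix)-L_i^*p_i\big)=\frac{1}{2}\sum_{i\in I}\omega_iL_i^*\big(F_i(L_ix)-p_i\big).
\]

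Finally I would feed this into Lemma~\ref{l:1}: because $f$ is convex and differentiable, a point $x$ minimizes $f$ over $C$ if and only if $x\in C$ and $\scal{y-x}{\nabla f(x)}\geq 0$ for every $y\in C$. Substituting the computed gradient and using the adjoint identity $\scal{y-x}{L_i^*u}=\scal{L_i(y-x)}{u}$ converts this into $\tfrac12\sum_{i\in I}\omega_i\scal{L_i(y-x)}{F_i(L_ix)-p_i}\geq 0$; since the factor $1/2$ does not affect the sign, this is precisely \eqref{e:prob2}. I expect the only delicate point to be the bookkeeping around conjugation and the Moreau decomposition needed to establish $\nabla\widetilde{g_i^*}=F_i$; once that identity is secured, the convexity, differentiability, and gradient computation are routine, and the conclusion follows by a direct application of Lemma~\ref{l:1}.
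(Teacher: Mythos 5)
Your proof is correct and follows essentially the same route as the paper: identify $\nabla\widetilde{g_i^*}$ with $F_i$ (the paper cites this directly as $\nabla\widetilde{g_i^*}=\prox_{g_i}$, while you rederive it from $\nabla\widetilde{g_i^*}=\Id-\prox_{g_i^*}$ and Moreau's decomposition—the same fact), compute $\nabla f$ by the chain rule, and conclude via Lemma~\ref{l:1}. Your explicit retention of the harmless factor $1/2$ in the gradient is if anything slightly more careful than the paper's statement of $\nabla f$.
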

\begin{proof}
We derive from \cite[Proposition~24.4]{Livre1} that 
$(\forall i\in I)$ $\nabla\widetilde{g_i^*}=\prox_{g_i}$.
In turn, $f$ is differentiable and 
\begin{equation}
(\forall x\in\HH)\quad\nabla f(x)=\sum_{i\in I}\,\omega_i
L_i^*\big(\prox_{g_i}(L_ix)-p_i\big)=\sum_{i\in I}
\,\omega_iL_i^*\big(F_i(L_ix)-p_i\big).
\end{equation}
Consequently, \eqref{e:prob2} is equivalent to finding a solution
to \eqref{e:15}, i.e., by Lemma~\ref{l:1}, to minimizing $f$ over 
$C$.
\end{proof}

Next, we present a block-iterative algorithm for solving 
Problem~\ref{prob:2}. 

\begin{proposition}
\label{p:2}
Consider the setting of Problem~\ref{prob:2} under the assumption
that it has a solution. Let $K$ be a strictly positive integer and
let $(I_n)_{n\in\NN}$ be a sequence of nonempty subsets of $I$ such
that 
\begin{equation}
\label{e:K}
(\forall n\in\NN)\quad\bigcup_{k=0}^{K-1}I_{n+k}=I.
\end{equation}
Let $x_0\in\HH$, let $\gamma\in\left]0,2\right[$, and, for every
$i\in I$, let $t_{i,-1}\in\HH$ and set $\gamma_i=\gamma/\|L_i\|^2$.
Iterate
\begin{equation}
\label{e:a3}
\begin{array}{l}
\text{for}\;n=0,1,\ldots\\
\left\lfloor
\begin{array}{l}
\text{for every}\;i\in I_n\\
\left\lfloor
\begin{array}{l}
t_{i,n}=x_n-\gamma_iL_i^*\big(F_i(L_ix_n)-p_i\big)\\
\end{array}
\right.\\
\text{for every}\;i\in I\smallsetminus I_n\\
\left\lfloor
\begin{array}{l}
t_{i,n}=t_{i,n-1}\\
\end{array}
\right.\\[1mm]
x_{n+1}=\proj_{C}\Bigg(\Sum_{i=1}^m\omega_it_{i,n}\Bigg).
\end{array}
\right.\\
\end{array}
\end{equation}
Then $(x_n)_{n\in\NN}$ converges weakly to a solution to
Problem~\ref{prob:2}.
\end{proposition}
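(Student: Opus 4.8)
The plan is to recognize the iteration \eqref{e:a3} as a special case of the block-iterative forward--backward scheme \eqref{e:a30} and then to quote Proposition~\ref{p:20}. First I would turn Problem~\ref{prob:2} into a monotone inclusion. Arguing as in the proof of Proposition~\ref{p:8}\ref{p:8i}, the normal-cone formula \eqref{e:normalcone} shows that $x\in\HH$ solves \eqref{e:prob2} if and only if $-\sum_{i\in I}\omega_iL_i^*(F_i(L_ix)-p_i)\in N_Cx$; equivalently, with $A_0=N_C$ and $B_i\colon x\mapsto L_i^*(F_i(L_ix)-p_i)$,
\[
0\in A_0x+\sum_{i\in I}\omega_iB_ix.
\]
This is the inclusion of Problem~\ref{prob:20}, and $A_0=N_C$ is maximally monotone with $J_{\gamma A_0}=\proj_C$ because $\gamma\iota_C=\iota_C$.

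Next I would establish the cocoercivity needed to invoke Proposition~\ref{p:20}. Since the constant $p_i$ cancels in differences, \eqref{e:f} together with $\|L_i^*u\|\le\|L_i\|\,\|u\|$ yields, for all $x,y\in\HH$,
\[
\scal{x-y}{B_ix-B_iy}=\scal{L_i(x-y)}{F_i(L_ix)-F_i(L_iy)}\ge\|F_i(L_ix)-F_i(L_iy)\|^2\ge\frac{1}{\|L_i\|^2}\|B_ix-B_iy\|^2,
\]
so $B_i$ is $\|L_i\|^{-2}$-cocoercive. To convert the block-dependent step sizes $\gamma_i=\gamma/\|L_i\|^2$ of \eqref{e:a3} into the single step demanded by \eqref{e:a30}, I would pass to the rescaled operators $A_i=\|L_i\|^{-2}B_i$, which are firmly nonexpansive (i.e. $\beta_i=1$) and satisfy $\gamma_iB_i=\gamma A_i$. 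The admissible range $\left]0,2\min_i\beta_i\right[$ of Proposition~\ref{p:20} then reduces to $\left]0,2\right[$, matching the hypothesis on $\gamma$.

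With $A_0=N_C$, the operators $(A_i)_{i\in I}$, and the weights $(\omega_i)_{i\in I}$, the scheme \eqref{e:a30} becomes \eqref{e:a3} verbatim: for $i\in I_n$ the update is $t_{i,n}=x_n-\gamma A_ix_n=x_n-\gamma_iL_i^*(F_i(L_ix_n)-p_i)$, for $i\notin I_n$ one keeps $t_{i,n}=t_{i,n-1}$, and $x_{n+1}=J_{\gamma A_0}(\sum_{i}\omega_it_{i,n})=\proj_C(\sum_i\omega_it_{i,n})$. The sweeping condition \eqref{e:K} is precisely the hypothesis $\bigcup_{k=0}^{K-1}I_{n+k}=I$, and the standing assumption that Problem~\ref{prob:2} has a solution supplies a zero of the inclusion. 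Proposition~\ref{p:20} then gives the weak convergence of $(x_n)_{n\in\NN}$ to a solution of the inclusion.

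The main obstacle I anticipate is the bookkeeping hidden in the rescaling step: Proposition~\ref{p:20} applied to $(A_i)=(\|L_i\|^{-2}B_i)$ returns a point satisfying $0\in N_Cx+\sum_i\omega_iA_ix$, in which the factors $\|L_i\|^{-2}$ have effectively been folded into the weights. One must therefore verify carefully that this limiting inclusion coincides with the inclusion $0\in N_Cx+\sum_i\omega_iB_ix$ characterizing \eqref{e:prob2}---that is, that absorbing $\|L_i\|^{-2}$ to license the larger, block-dependent steps $\gamma_i$ does not alter the solution set. The cocoercivity estimate above is routine once \eqref{e:f} and the operator-norm bound are in place; reconciling the weights $(\omega_i)$ with the step sizes $(\gamma_i)$ so that the fixed points of \eqref{e:a3} are exactly the solutions of Problem~\ref{prob:2} is the delicate part, and here the shared-value property recorded in Remark~\ref{r:22} is the natural tool for controlling the effect of the reweighting.
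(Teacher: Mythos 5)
Your argument is, in substance, the paper's own proof: the paper sets $A_0=N_C$ and $A_i=\|L_i\|^{-2}(L_i^*\circ F_i\circ L_i-L_i^*p_i)$, notes via \cite[Proposition~4.12]{Livre1} that each $A_i$ is firmly nonexpansive (your direct estimate $\scal{x-y}{B_ix-B_iy}\geq\|L_i\|^{-2}\|B_ix-B_iy\|^2$ establishes the same fact by hand), identifies \eqref{e:a3} with \eqref{e:a30}, and invokes Proposition~\ref{p:20}; your observation that $J_{\gamma N_C}=\proj_C$ is the right justification for the last line of \eqref{e:a3}. So the reduction is correct and coincides with the paper's.

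Regarding the ``obstacle'' in your final paragraph: you have put your finger on the one point that the paper's proof passes over in silence, but the remedy you propose does not work. Proposition~\ref{p:20} applied to the rescaled operators produces a zero of $N_C+\sum_{i\in I}\omega_i\|L_i\|^{-2}(L_i^*\circ F_i\circ L_i-L_i^*p_i)$, i.e., a solution of the variational inequality \eqref{e:prob2} in which the weights $\omega_i$ have effectively been replaced by weights proportional to $\omega_i/\|L_i\|^2$. These two variational inequalities need not have the same solution set (already for $C=\HH=\RR$, $F_1=F_2=\Id$, and $L_1$, $L_2$ of different norms, each weighted problem has a different unique solution), and Remark~\ref{r:22} cannot bridge them: it compares two solutions of a \emph{single} instance of \eqref{e:prob2}, not the solution sets of two differently weighted instances. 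The honest reading is therefore either that the limit solves Problem~\ref{prob:2} for the effective weights $\omega_i\|L_i\|^{-2}/\sum_{j\in I}\omega_j\|L_j\|^{-2}$ --- still an instance of Problem~\ref{prob:2}, just not the one with the stated weights --- or that the stated weights are recovered by running \eqref{e:a30} with the unrescaled operators $L_i^*\circ F_i\circ L_i-L_i^*p_i$ (which are $\|L_i\|^{-2}$-cocoercive) and a common step size $\gamma\in\left]0,2\min_{i\in I}\|L_i\|^{-2}\right[$, at the cost of smaller steps. Since the paper's proof does not address this point at all, you should not expect to find the missing reconciliation there; but you should be aware that your suggested appeal to Remark~\ref{r:22} would not supply it either.
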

\begin{proof}
Set $A_0=N_C$ and $(\forall i\in I)$ 
$A_i=\|L_i\|^{-2}(L_i^*\circ F_i\circ L_i-L_i^*p_i)$.
For every $i\in I$, since $F_i$ is firmly nonexpansive, it follows
from \cite[Proposition~4.12]{Livre1} that $A_i$ is firmly
nonexpansive, i.e., cocoercive with $\beta_i=1$. Thus, \eqref{e:a3}
is a special case of \eqref{e:a30}, and the conclusion follows from
Proposition~\ref{p:20}.
\end{proof}

An attractive feature of \eqref{e:a3} is its ability to activate
only a subblock of operators $(F_i)_{i\in I_n}$ at iteration $n$,
as opposed to all of them as in classical algorithms dealing with
inconsistent common fixed point problems
\cite{Cens05,Cens18,Sign94,Siop13}. This flexibility is of the
utmost relevance for very large-scale applications. It will also be
seen in Section~\ref{sec:5} to lead to more efficient
implementations. Condition \eqref{e:K} regulates the frequency of
activation of the operators. Since $K$ can be chosen arbitrarily,
it is actually quite mild.

\section{Numerical experiments}
\label{sec:5}

In this section, we illustrate the ability of the proposed
framework to efficiently model and solve various signal and image
recovery problems with inconsistent nonlinear prescriptions. Each
instance will use the block-iterative algorithm~\eqref{e:a3} which
was shown in Proposition~\ref{p:2} to produce an exact solution of
Problem~\ref{prob:2} from any initial point in $\HH$. Here, we
implement it with $x_0=0$.

\begin{remark}
\label{rmk:pro}
In the modeling of signal construction problems as minimization
problems, it is common practice to add a function $g$ to the
objective in order to promote desirable properties in the
solutions. Several functions are thus averaged and contribute
collectively to defining solutions. A prominent example is the
promotion of sparsity through the addition of a penalty such as the
$\ell^1$ norm in $\RR^N$ \cite{Cand06,Tibs11}. In the more general
variational inequality setting of Problem~\ref{prob:2}, this
template can be mimicked by adding the prescription $Fy=0$, where
$F=\Id-\prox_g$, i.e., by Moreau's decomposition,
$F=\prox_{g^*}$ \cite[Remark~14.4]{Livre1}. Note that exact
satisfaction of the equality $Fy=0$ would just mean that one
minimizes $g$ since $\Fix\prox_g=\Argmin g$. In general, when
incorporated to Problem~\ref{prob:2}, the pair
$(F,p)=(\Id-\prox_g,0)$ is intended to promote the properties $g$
would in a standard minimization problem. We investigate in
Sections~\ref{sec:43} and \ref{sec:44} this technique to encourage
sparsity in $\RR^N$ through the incorporation of the operator
$F=\proj_{B_\infty(0;\rho)}=\Id-\prox_{\rho\|\cdot\|_1}$, where
$B_\infty(0;\rho)$ is the $\ell^\infty$ ball of $\RR^N$ centered at
the origin and with radius $\rho\in\RPP$.
\end{remark}

\subsection{Image recovery}
\label{sec:41}
The goal is to recover the original image
$\overline{x}\in\HH=\RR^N$ ($N=256^2$) shown in
Figure~\ref{fig:3}(a)
from the following.
\begin{itemize}
\item
Bounds on pixel values: $\overline{x}\in C=[0,255]^N$.
\item
The degraded image $p_1\in\GG_1=\HH$ shown in
Figure~\ref{fig:3}(b), which is modeled as follows. The image
$\overline{x}$ is blurred by $L_1\colon\HH\to\GG_1$, which performs
discrete convolution with a $15\times 15$ Gaussian kernel with
standard deviation of $3.5$, then corrupted by an additive noise
$w_1\in\GG_1$. The blurred image-to-noise ratio is
$20\log_{10}(\|L_1\overline{x}\|/\|w_1\|)=24.0$ dB. Pixel values
beyond $60$ are then clipped. Altogether,
$p_1=\proj_{D_1}(L_1\overline{x}+w_1)$, where $D_1=[0,60]^N$. This
process models a low-quality image acquired by a device which
cannot detect photon counts beyond a certain threshold. We
therefore use $F_1=\proj_{D_1}$ in \eqref{e:prob2}.
\item
An approximation of the mean pixel value $\rho_2=138$ of
$\overline{x}$. To enforce this information, following
Example~\ref{ex:1}, we set $\GG_2=\HH$, $L_2=\Id$, $p_2=0$, and 
\begin{equation}
F_2\colon (\eta_k)_{1\leq k\leq N}\mapsto
x-\bigg(\dfrac{\sum_{k=1}^N\eta_k}{N}-\rho_2\bigg)\boldsymbol{1}.
\end{equation}
\item
The phase $\theta\in[-\pi,\pi]^N$ of the 2-D discrete
Fourier transform of a noise-corrupted
version of $\overline{x}$, i.e., 
$\theta=\angle\dft(\overline{x}+w_3)$, where $w_3\in\HH$ 
yields an image-to-noise
ratio $20\log_{10}(\|\overline{x}\|/\|w_3\|)=49.0$~dB. To model
this information, we set $\GG_3=\HH$, $L_3=\Id$, $p_3=0$, and
\begin{equation}
F_3\colon y\mapsto y-\idft\bigg(\big|\dft{y}\big|
\max\Big\{\cos\big(\angle(\dft{y})-\theta\big),0\Big\}
\exp(i\theta)\bigg).
\end{equation}
\end{itemize}
Due to the noise present in $p_1$ and $\theta$, and the inexact
estimation of $\rho_2$, this instance of Problem~\ref{prob:1}
($I=\{1,2,3\}$) is inconsistent. We thus arrive at the relaxed
Problem~\ref{prob:2} by setting $\omega_1=\omega_2=\omega_3=1/3$.
By Proposition~\ref{p:8}\ref{p:8ii}, since $C$ is bounded,
Problem~\ref{prob:2} is guaranteed to possess a solution. The
solution shown in Figure~\ref{fig:3}(c) is computed using
algorithm~\eqref{e:a3} with $\gamma=1.9$ and 
$(\forall n\in\NN)$ $I_n=I$. This experiment illustrates a
nonlinear recovery scenario with inconsistent measurements which
nonetheless produces realistic solutions obtained by
exploiting all available information. 

\begin{figure}[H]
\centering
\begin{tabular}{@{}c@{}c@{}c@{}}
\includegraphics[width=5.25cm]{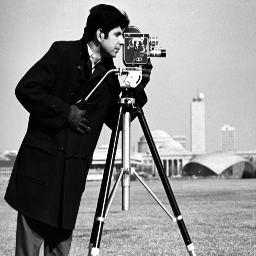}&
\hspace{0.2cm}
\includegraphics[width=5.25cm]{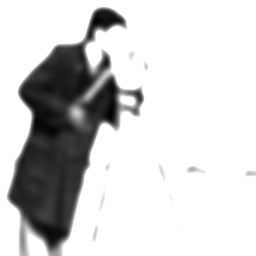}&
\hspace{0.2cm}
\includegraphics[width=5.25cm]{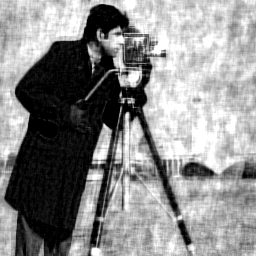}\\
\small{(a)} & \small{(b)} & \small{(c)}
\end{tabular} 
\caption{
Experiment of Section~\ref{sec:41}:
(a) Original image $\overline{x}$.
(b) Degraded image $p_1$.
(c) Recovered image.
}
\label{fig:3}
\end{figure}

\subsection{Signal recovery}
\label{sec:42}
The goal is to recover the original signal $\overline{x}\in\HH=C=
\RR^N$ ($N=1024$) shown in Figure~\ref{fig:1d}(a) from the
following.
\begin{itemize}
\item
A piecewise constant approximation $p_1$ of $\overline{x}$,
given by $p_1=\proj_{D_1}(\overline{x}+w_1)$, where $w_1\in\GG_1$
represents noise and $D_1$ is the subspace of signals in
$\GG_1=\HH$ which are constant by blocks along each of the $16$
sets of $64$ consecutive indices in $\{1,\ldots,N\}$ (see
Figure~\ref{fig:1d}(b)). The signal-to-noise ratio is
$20\log_{10}(\|\overline{x}\|/\|w_1\|)=-2.3$~dB. We model this
observation by setting $L_1=\Id$ and $F_1=\proj_{D_1}$.
\item
A bound $\rho_2=0.025$ on the magnitude of the finite differences
of $\overline{x}$. To enforce this information, following
Example~\ref{ex:1}, we set $\GG_2=\RR^{N-1}$,
$L_2\colon\HH\to\GG_2\colon (\xi_i)_{1\leq i\leq N}\mapsto
(\xi_{i+1}-\xi_i)_{1\leq i\leq N-1}$, $p_2=0$, and
$F_2=\Id-\proj_{D_2}$, where
$D_2=\menge{y\in\GG_2}{\|y\|_{\infty}\leq\rho_2}$, that is,
using \eqref{e:5},
\begin{equation}
F_2\colon (\eta_k)_{1\leq k\leq N-1}\mapsto
\big(\soft{\gamma}(\eta_k)\big)_{1\leq k\leq N-1}.
\end{equation}
\item
A collection of $m=1200$ noisy thresholded scalar observations
$r_3=(\chi_j)_{j\in J}\in\RR^{m}$ of $\overline{x}$, where
$J=\{3,\ldots,m+2\}$. The true data formation model is
\begin{equation}
\label{e:qk}
(\forall j\in J)\quad
\chi_j=R(\scal{\overline{x}}{e_j})+\nu_j,
\end{equation}
where $(e_j)_{j\in J}$ is a dictionary of random vectors in
$\mathbb{R}^N$ with zero-mean i.i.d.\ entries, the noise vector
$w_3=(\nu_j)_{j\in J}$ yields a signal-to-noise ratio of
$20\log_{10}(\|r_3\|/\|w_3\|)=17.8$~dB, and $R$ is the
thresholding operator of the type found in \cite{Abra98,Taov00}
($\rho=0.05$), namely
\begin{equation}
\label{e:22w}
R\colon\RR\to\RR\colon\eta\mapsto
\begin{cases}
\sign(\eta)\sqrt[4]{\eta^4-\rho^4},&\text{if}\;\;|\eta|>\rho;\\
0,&\text{if}\;\;|\eta|\leq\rho.
\end{cases} 
\end{equation}
We assume that $R$ is misspecified and that the presence of noise
is unknown, so that the data acquisition process is incorrectly
modeled as
\begin{equation}
\label{e:qb}
(\forall j\in J)\quad \chi_j=Q(\scal{\overline{x}}{e_j}),
\end{equation}
where
\begin{equation}
\label{e:22-1}
Q\colon\RR\to\RR\colon\eta\mapsto
\begin{cases}
\sign(\eta)\sqrt{\eta^2-\rho^2},&\text{if}\;\;|\eta|>\rho;\\
0,&\text{if}\;\;|\eta|\leq\rho.
\end{cases} 
\end{equation}
Note that $Q$ is not Lipschitzian. Nonetheless, with
\begin{equation}
\label{e:ex2-p}
S\colon\RR\to\RR\colon\eta\mapsto
\sign(\eta)\left(\sqrt{\eta^2+\rho^2}-\rho\right),
\end{equation}
it is straightforward to verify that $S\circ Q=\soft{\rho}$ and
that, for every $j\in J$, $(F_j,p_j)=(\soft{\rho},S\chi_j)$ is a
proxification of $(Q,\chi_j)$. Also, for every $j\in J$, set
$\GG_j=\RR$ and $L_j=\scal{\cdot}{e_j}$.
\end{itemize}
We thus obtain an instantiation of Problem~\ref{prob:2} with
$I=\{1,2\}\cup J$ and, for every $i\in I$, $\omega_i=1/(\card\,I)$.
Since $(e_j)_{j\in J}$ is overcomplete and,
for every $j\in J$, $F_j$ is surjective, it follows that
$\HH=\menge{\sum_{j\in J}\omega_j\eta_je_j}{\eta_j\in\ran F_j}
=\sum_{j\in J}\omega_jL_j^*(\ran F_j)\subset\sum_{i\in I}
\omega_iL_i^*(\ran F_i)$, so Problem~\ref{prob:2} is guaranteed
to possess a solution by Proposition~\ref{p:8}\ref{p:8iii}. 
Algorithm~\eqref{e:a3} produces the signal shown in
Figure~\ref{fig:1d}(c) with $\gamma=1.9$ and the following
activation strategy. At every iteration, $F_1$ and $F_2$ are
activated, while we partition $J$ into four blocks of $300$
elements and cyclically activate one block per iteration, i.e.,
\begin{equation}
\label{e:cycle}
(\forall n\in\NN)(\forall j\in\{0,1,2,3\})\quad
I_{4n+j}=\{1,2,3+300j,\ldots,2+300(j+1)\},
\end{equation}
which satisfies condition~\eqref{e:K} with $K=4$. 
This shows that, even when the data is noisy and
poorly modeled, Problem~\ref{prob:2} produces quite robust
recoveries. The execution time savings resulting from the use of
\eqref{e:cycle} compared to the full activation strategy
(i.e., $I_n=I$ for every $n\in\NN$) are displayed in
Figure~\ref{fig:1derr}. Note that in very large-scale scenarios
in which all data cannot be simultaneously loaded into memory,
activation strategies such as \eqref{e:cycle} make
algorithm~\eqref{e:a3} implementable. 

\begin{figure}[H]
\centering
\begin{tikzpicture}[scale=1.0]
\begin{axis}[height=7.4cm,width=15.5cm, legend cell align={left},
xmin =0, xmax=1024, ymin=-0.6, ymax=1.1]
\addplot [thick, mark=none, color=bluep] table[x={n}, y={orig}]
{figures/ex2/orig.txt};
\end{axis}
\end{tikzpicture}\\
(a)\\[0.3em]
\begin{tikzpicture}[scale=1.0]
\begin{axis}[height=7.4cm,width=15.5cm, legend cell align={left},
xmin =0, xmax=1024, ymin=-0.6, ymax=1.1]
\addplot [thick, mark=none, color=bluep] table[x={n}, y={constant}]
{figures/ex2/constant.txt};
\end{axis}
\end{tikzpicture}\\
(b)\\[0.3em]
\begin{tikzpicture}[scale=1.0]
\begin{axis}[height=7.4cm,width=15.5cm, legend cell align={left},
xmin =0, xmax=1024, ymin=-0.6, ymax=1.1]
\addplot [thick, mark=none, color=bluep] table[x={n}, y={recovered}]
{figures/ex2/recovered.txt};
\end{axis}
\end{tikzpicture}\\
(c)
\caption{Experiment of Section~\ref{sec:42}: (a): Original signal
$\overline{x}$. (b): Piecewise constant approximation $p_1$. (c):
Recovered signal.}
\label{fig:1d}
\end{figure}

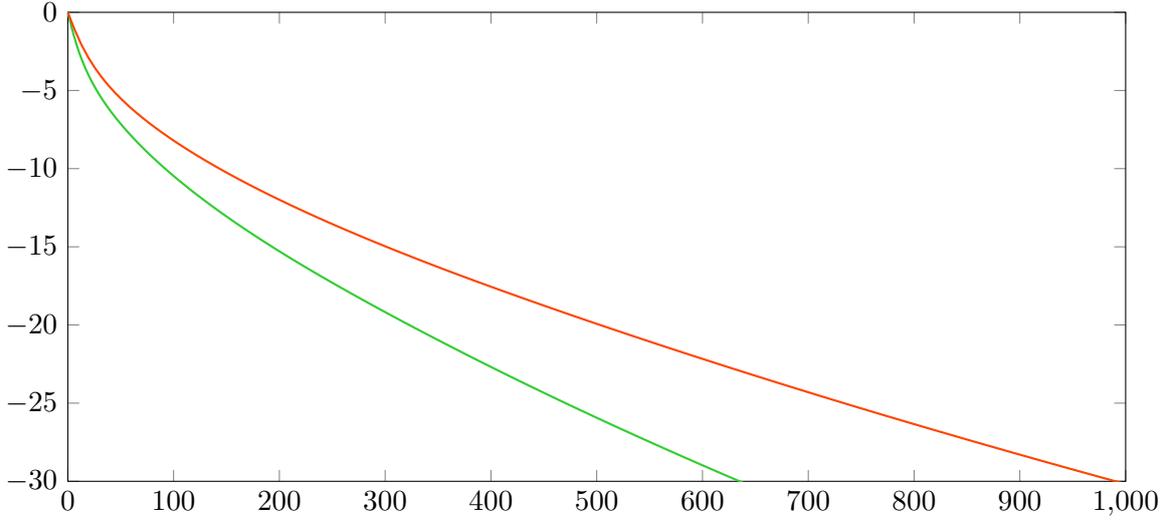
\begin{figure}[H]
\centering
\begin{tikzpicture}[scale=1.0]
\begin{axis}[height=7.8cm,width=15.5cm, legend cell align={left},
xmin =0, xmax=1000, ymin=-30, ymax=0.0]
\addplot [thick, mark=none, color=pgreen] table[x={time}, y={err}]
{figures/ex2/cycledb.txt};
\addplot [thick, mark=none, color=pred] table[x={time}, y={err}]
{figures/ex2/fulldb.txt};
\end{axis}
\end{tikzpicture}
\caption{Experiment of Section~\ref{sec:42}: Relative error
$20\log_{10}(\|x_n-x_{\infty}\|/\|x_0-x_{\infty}\|)$ (dB)
versus execution time (seconds) for full activation
(red) and cyclic activation \eqref{e:cycle}
(green).}
\label{fig:1derr}
\end{figure}

\subsection{Sparse image recovery}
\label{sec:43}
The goal is to recover the original image
$\overline{x}\in\HH=\RR^N$ ($N=256^2$) shown in
Figure~\ref{fig:t1}(a) from the following.
\begin{itemize}
\item
Bounds on pixel values: $x\in C=[0,255]^N$.
\item
The low rank approximation $q_1\in\GG_1=\HH$ displayed in 
Figure~\ref{fig:t1}(b) of a blurred noisy version of
$\overline{x}$ modeled as follows. The blurring operator
$L_1\colon\HH\to\GG_1$ applies a discrete convolution with a
uniform $7\times 7$ kernel, and the operators $Q$ and $S$ are as in
Example~\ref{ex:svd}, with threshold $\rho=500$. Then
$q_1=Q(L_1\overline{x}+w_1)$ is a rank-$85$ compression, where
$w_1\in\GG_1$ induces a blurred image-to-noise ratio of
$20\log_{10}(\|L_1\overline{x}\|/\|w_1\|)=17.6$~dB. By
Example~\ref{ex:svd}, we obtain a proxification of $(Q,q_1)$ with
$(F_1,p_1)=(S\circ Q,Sq_1)$.
\item
$\overline{x}$ is sparse. To promote this property in the 
solutions to \eqref{e:prob2}, following Remark~\ref{rmk:pro}, we
set $\GG_2=\HH$, $L_2=\Id$, $p_2=0$, $\rho_2=1.5$, and
$F_2=\proj_{B_\infty(0;\rho_2)}$.
\end{itemize}
We therefore arrive at an instance of Problem~\ref{prob:2}
with $I=\{1,2\}$ and $\omega_1=\omega_2=1/2$. Since $C$ is bounded,
Proposition~\ref{p:8}\ref{p:8ii} guarantees that a solution exists.
Algorithm~\eqref{e:a3} with $\gamma=1$ yields the recovery in
Figure~\ref{fig:t1}(c). Even though computing $F_1$ requires only
one singular value decomposition (not two, as \eqref{e:tsvd} may
suggest), it is the most numerically expensive operator in this
problem. Therefore, we choose to activate $F_1$ only every $5$
iterations, i.e.,
\begin{equation}
\label{e:skip}
I_n=\begin{cases}
I\smallsetminus\{1\},&\text{if}\;\; n\not\equiv 0\mod 5;\\
I,&\text{if}\;\;n\equiv 0\mod 5.
\end{cases}
\end{equation}
Figure~\ref{fig:terr} displays the time savings resulting from 
the use of \eqref{e:skip} compared to full activation (both
activation strategies yield visually indistinguishable
recoveries). Notice that, while the observation in
Figure~\ref{fig:t1}(b) is virtually illegible, many of the words in
the recovery of Figure~\ref{fig:t1}(c) can be discerned. 

\begin{figure}[H]
\centering
\begin{tabular}{@{}c@{}c@{}c@{}}
\includegraphics[width=5.25cm]{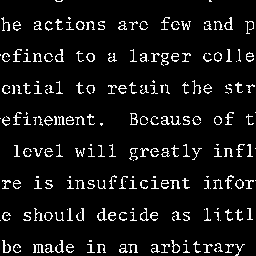}&
\hspace{0.2cm}
\includegraphics[width=5.25cm]{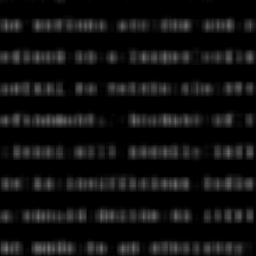}&
\hspace{0.2cm}
\includegraphics[width=5.25cm]{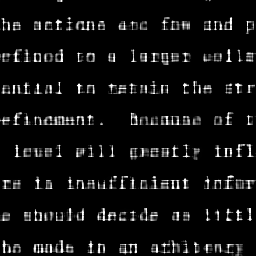}\\
\small{(a)} & \small{(b)} & \small{(c)}
\end{tabular} 
\caption{
Experiment of Section~\ref{sec:43}:
(a) Original image $\overline{x}$.
(b) Degraded image $q_1$.
(c) Recovered image.
}
\label{fig:t1}
\end{figure}

Finally, we examine the use of the non firmly nonexpansive
sparsity-promoting operator of Example~\ref{ex:wc}. Specifically,
$Q_{\gamma g}$ is given by \eqref{e:lt}, which is induced by the
logarithmic penalty with parameters $\rho=\rho_2$ and
$\gamma=0.05/\rho_2^2$. This implies that $0.95 Q_{\gamma g}$ is
firmly nonexpansive and hence that $\Id-0.95 Q_{\gamma g}$ is
likewise. Figure~\ref{fig:t2} displays the result when $F_2$ is
replaced by componentwise applications of $\Id-0.95 Q_{\gamma g}$.
In this experiment, the $\ell^1$ penalty-based operator $F_2$
yields a sharper recovery in Figure~\ref{fig:t1}(c) than the
recovery in Figure~\ref{fig:t2}, which is induced by the
logarithmic penalty.

\begin{figure}[b]
\centering
\includegraphics[width=5.25cm]{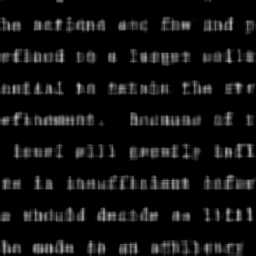}\\
\caption{
Experiment of Section~\ref{sec:43}: Recovered image with
logarithmic thresholding instead of soft thresholding.
}
\label{fig:t2}
\end{figure}

\begin{figure}[H]
\centering
\begin{tikzpicture}[scale=1.0]
\begin{axis}[height=7.8cm,width=15.5cm, legend cell align={left},
xmin =0, xmax=1700, ymin=-30, ymax=0.0]
\addplot [thick, mark=none, color=pgreen] table[x={time}, y={err}]
{figures/ex3/skip5db.txt};
\addplot [thick, mark=none, color=pred] table[x={time}, y={err}]
{figures/ex3/fulldb.txt};
\end{axis}
\end{tikzpicture}
\caption{Experiment of Section~\ref{sec:43}: Relative error
$20\log_{10}(\|x_n-x_{\infty}\|/\|x_0-x_{\infty}\|)$ (dB)
versus execution time (seconds) for full-activation
(red) and block activation \eqref{e:skip} (green).}
\label{fig:terr}
\end{figure}

\subsection{Source separation}
\label{sec:44}
This experiment incorporates nonlinear compression to a problem in
astronomy, which seeks to separate a background image
$\overline{x}_1\in\RR^N$ ($N=600^2$) of stars from a galaxy image
$\overline{x}_2\in\RR^N$ \cite{McCo14}. The goal is to
construct the image pair
$(\overline{x}_1,\overline{x}_2)\in\HH=\RR^N\times\RR^N$ given the
following.
\begin{itemize}
\item
Bounds on pixel values: $(\overline{x}_1,\overline{x}_2)\in
C=[0,255]^N\times [0,255]^N$.
\item
The low rank approximation $q_1\in\GG_1=\RR^N$ shown in 
Figure~\ref{fig:s1}(b) of the original superposition 
$\overline{x}_1+\overline{x}_2$ shown in 
Figure~\ref{fig:s1}(a), which is modeled as follows. Set
$L_1\colon\HH\to\GG_1\colon(x_1,x_2)\mapsto x_1+x_2$, and let $Q$
and $S$ be as in Example~\ref{ex:svd} with $\rho=1500$. The
resulting rank-$22$ approximation of
$\overline{x}_1+\overline{x}_2$ is given by
$q_1=Q(L_1(\overline{x}_1,\overline{x}_2))$. It follows from
Example~\ref{ex:svd} that $(F_1,p_1)=(S\circ Q,Sq_1)$ is a
proxification of $(Q,q_1)$.
\item
$\overline{x}_1$ is sparse, and $\overline{x}_2$ admits a sparse
representation relative to the $2$-D discrete cosine transform
$L\colon\RR^N\to\RR^N$ \cite{McCo14}. To encourage these
properties, as discussed in Remark~\ref{rmk:pro}, we set
$\GG_2=\HH$, $L_2\colon (x_1,x_2)\mapsto (x_1,Lx_2)$, $p_2=0$, and
$F_2\colon(y_1,y_2)\mapsto
(\proj_{B_\infty(0;10)}y_1,\proj_{B_\infty(0;45)}y_2)$. In view of
Example~\ref{ex:hc}, $F_2$ is firmly nonexpansive.
\end{itemize}
Thus, we arrive at an instance of Problem~\ref{prob:2} with
$I=\{1,2\}$ and $\omega_1=\omega_2=1/2$. By
Proposition~\ref{p:8}\ref{p:8ii} this problem is guaranteed to
possess a solution, since $C$ is bounded. Algorithm~\eqref{e:a3}
with $\gamma=1$ provides the solution shown in
Figure~\ref{fig:s1}(c)--(d). To improve algorithmic performance,
we adopt the activation strategy \eqref{e:skip}; see
Figure~\ref{fig:serr} for time savings compared to the
full activation strategy. As can be seen from
Figure~\ref{fig:s1}, this approach produces effective recoveries.
Even though this problem involves a discontinuous observation
process, we can nonetheless solve it with algorithm~\eqref{e:a3},
which exploits all of the information at hand.

\begin{figure}[t]
\centering
\begin{tabular}{@{}c@{}c@{}}
\includegraphics[width=5.25cm]{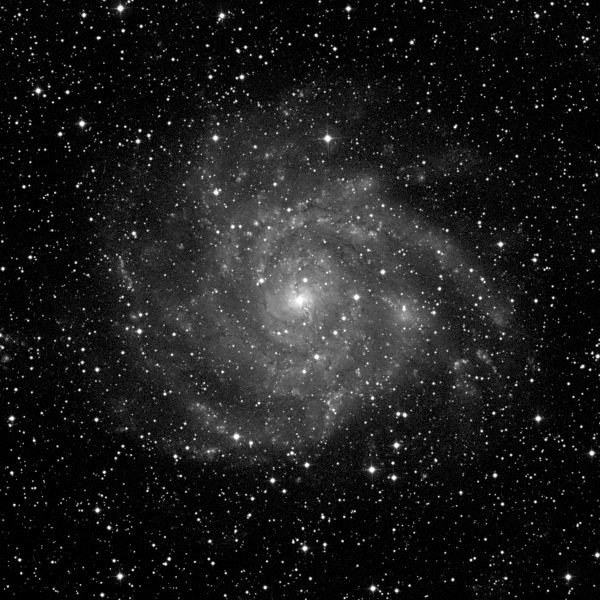}&
\hspace{0.2cm}
\includegraphics[width=5.25cm]{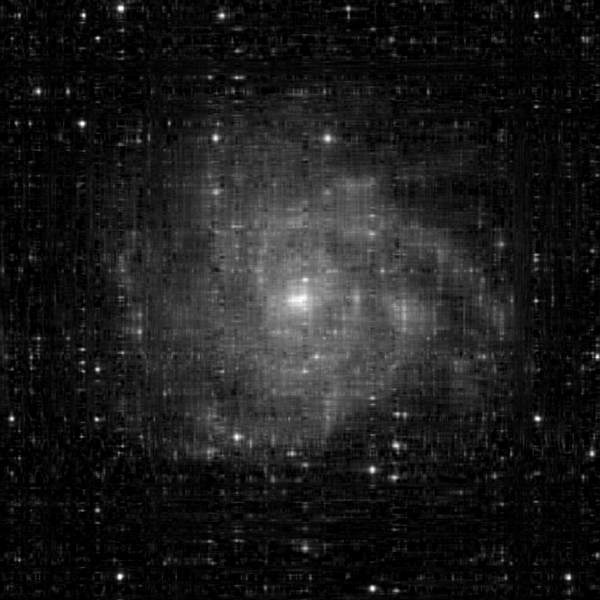}\\
\small{(a)} & \small{(b)}\\
\includegraphics[width=5.25cm]{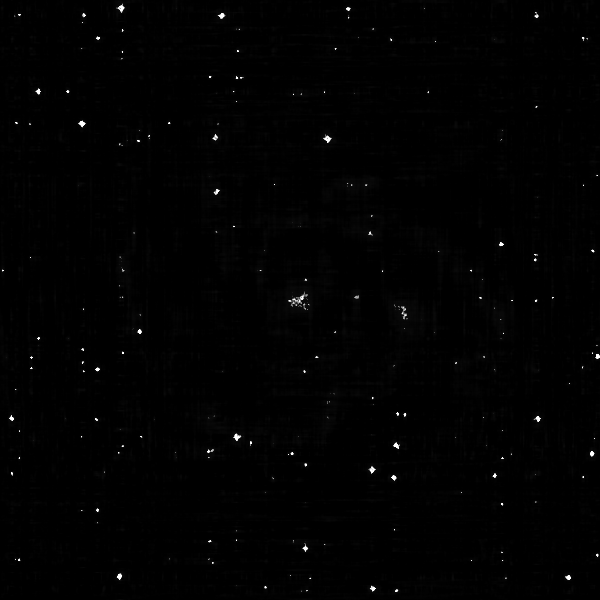}&
\hspace{0.2cm}
\includegraphics[width=5.25cm]{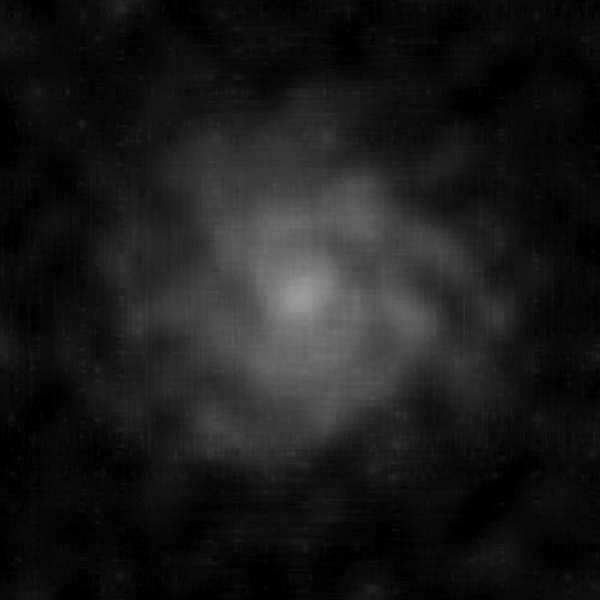}\\
\small{(c)} & \small{(d)}\\
\end{tabular} 
\caption{
Experiment of Section~\ref{sec:44}:
(a) Original image $\overline{x}_1+\overline{x}_2$.
(b) Low-rank compression of $\overline{x}_1+\overline{x}_2$.
(c) Recovered background (stars).
(d) Recovered foreground (galaxy).
}
\label{fig:s1}
\end{figure}

\begin{figure}[H]
\centering
\begin{tikzpicture}[scale=1.0]
\begin{axis}[height=7.8cm,width=15.5cm, legend cell align={left},
xmin =0, xmax=900, ymin=-30, ymax=0.0]
\addplot [thick, mark=none, color=pgreen] table[x={time}, y={err}]
{figures/ex4/skip5db.txt};
\addplot [thick, mark=none, color=pred] table[x={time}, y={err}]
{figures/ex4/fulldb.txt};
\end{axis}
\end{tikzpicture}
\caption{Experiment of Section~\ref{sec:44}: Relative error
$20\log_{10}(\|x_n-x_{\infty}\|/\|x_0-x_{\infty}\|)$ (dB)
versus execution time (seconds) for full-activation
(red) and block activation \eqref{e:skip} (green).}
\label{fig:serr}
\end{figure}

\end{document}